\newtheorem{theorem}{Theorem}[section]
\newtheorem*{trm}{Theorem}
\newtheorem{lemma}[theorem]{Lemma}
\newtheorem*{defin}{Definition}
\renewcommand{\Re}{\mathop{\mathrm{Re}}\nolimits}
\theoremstyle{definition}
\newtheorem{definition}[theorem]{Definition}
\theoremstyle{remark}
\newtheorem{remark}[theorem]{Remark}
\newtheorem{example}[theorem]{Example}
\numberwithin{equation}{section}
\DeclareMathOperator{\supp}{supp}
\DeclareMathOperator{\const}{const}
\begin{document}

\title[Indicator functions]{Indicator functions with uniformly bounded Fourier sums and large
gaps in the spectrum}

\author{S. V. Kislyakov and P. S. Perstneva}

\thanks{This research was supported by the Russian Science Foundation, grant 18-11-00053}

\keywords{Uncertainty principle, Men`shov correction theorem, thin spectrum}

\subjclass[2010]{Primary 43A25, 43A50.}

\address{St. Petersburg Department of the 
V. A. Steklov Math. Institute \\
27 Fontanka, St. Petersburg \\
191023, Russia}

\email{skis@pdmi.ras.ru}

\email{deepbrightblue@gmail.com}

\begin{abstract}
Indicator functions mentioned in the title are constructed on an arbitrary nondiscrete locally compact Abelian group of finite dimension. Moreover, they can be obtained by small perturbation from any indicator function fixed beforehand. In the case of a noncompact group, the term ``Fourier sums'' should be understood as ``partial Fourier integrals''. A certain weighted version of the result is also provided. This version leads to a new Men$'$shov-type correction theorem.
\end{abstract}
\maketitle

\section{Introduction}

When stated in precise terms, any specific form of the vague claim that a function and its Fourier transform cannot be too small simultaneously (the celebrated ``uncertainty principle in harmonic analysis'') often turns into the question about a frontier beyond which this claim becomes false. In the range of problems where smallness is understood as the vanishing on a large set, one such frontier was marked recently by Nazarov and Olevskii (see \cite{first}), who constructed  a set $E$ of finite positive measure on the real line such that the Fourier transform of the indicator function $\chi_E$ has support that is fairly thin at infinity. More specifically, given arbitrary mutually nonintersecting intervals $I_k$ in $\mathbb{R}_+$ whose lengths tend to infinity, the support of $\widehat{\chi_E}$ can be placed in $K\cup\left(\bigcup_k(I_k\cup (-I_k))\right)$ for some compact set $K$. We refer the reader to the same paper \cite{first} for a concise survey of known facts about the two ``countries'' separated by the borderline indicated. 

Shortly after, the first author of the present paper observed (see \cite{second}) that a slight modification of the construction in  \cite{first} makes it possible to turn an arbitrary set $A\subset\mathbb{R}$
of finite positive measure into a set $E$ as above by a small perturbation. It was also shown in \cite{second} that, basically with the same proof, a similar result holds for any nondiscrete locally compact Abelian group.\footnote{Formally, the claim is also true for discrete groups, but this is not interesting: the compact set $K$ mentioned in the description of the result is not controlled, all this is about the behavior of Fourier transforms at infinity.} In fact, the invocation of the idea of ``correcting'' a given indicator function was motivated by the results of \cite{1, 2} and \cite{menshov}. For example, in the last paper, an analog of Men$'$shov's classical correction theorem was proved for an arbitrary locally compact Abelian group of finite dimension, moreover, the spectrum of the corrected function was placed in a ``thin'' set like the above union $\bigcup_k(I_k\cup (-I_k))$ in the case of $\mathbb{R}$.

We remind the reader that, on the circle, Men$'$shov's correction theorem says that any measurable (equivalently, any measurable and bounded) function can be modified on a set of an arbitrarily small measure so as to acquire a uniformly convergent Fourier series. Surely, the analog of this statement for general groups also involves a certain type of uniform convergence for Fourier expansions. For the \textit{indicator} function that emerges after correction, one might only hope for the uniform boundedness of partial Fourier sums or integrals instead of uniform convergence, but even this was not ensured in  \cite{second}, moreover, it was hinted there that the method would unlikely be suitable for that.

However, later, a more careful look at the situation showed that, even within the class of indicator functions, we can still combine ``thin'' spectrum, uniform boundedness of partial Fourier integrals, and the idea of correction \textit{\`a la} Men$'$shov. Again, all this can be done on every (nondiscrete) locally compact Abelian group of finite dimension. The present paper is devoted to the exposition of this and related results. The clever nonlinear construction by Nazarov and Olevskii will again be in the core of the arguments, but here this construction will require a more substantial modification than in \cite{second}. Also, some techniques of the paper \cite{menshov} will be invoked (which, however, are rather standard in similar issues).

The paper is organized as follows. In \S1, after necessary preliminaries, we state the results and comment on them. The final \S2 is devoted to the proofs.
 
\section{Preliminaries and precise statements}

Throughout, $G$ will be a nondiscrete locally compact Abelian group of finite dimension, and $\Gamma$ will stand for its group of characters, with Haar measures $dx$ and $d\gamma$; it is assumed that these Haar measures are normalized so that the Fourier transform $\mathcal{F}$, $\mathcal{F} f(\gamma) = \int{f(x)\overline{\gamma(x)}dx}, \gamma \in \Gamma, f \in L^1(G)$, is a unitary operator from $L^2(G)$ onto $L^2(\Gamma)$. We will often write $|e|$ for the Haar measure of a measurable subset $e$ of $G$ or $\Gamma$.

We reproduce a definition from \cite{menshov} (see also \cite{iv2010}).

\begin{defin}[Sufficient pairs]
A pair $(R, S)$ of closed subsets of $\Gamma$ is said to be sufficient if for every compact set $E \subset \Gamma$ there exists a character $\gamma \in \Gamma$ with $-\gamma + E \subset R$ and $\gamma + E \subset S$.
\end{defin}

We shall put the spectrum of corrected (indicator) functions in the union $K\cup R\cup S$, where $(R,S)$ is a sufficient pair in $\Gamma$ and $K$ is a certain compact set depending on the function we are going to modify. The couple $(-\bigcup_k I_k, \bigcup_k I_k)$, which occurred in the Introduction, is sufficient for the (dual) group (of) $\mathbb{R}$. Clearly, a similar construction with intervals replaced by mutually nonintersecting balls of radii tending to infinity provides a sufficient pair in the case of (the dual group of) $\mathbb{R}^n$, and examples for the dual group $\mathbb{Z}^n$ of the torus $\mathbb{T}^n$ are provided in the same way. Moreover, in the case of $\mathbb{R}^n$ or $\mathbb{T}^n$, any sufficient pair includes another one of the above form.

Next, to discuss uniformly bounded Fourier sums (or partial Fourier integrals), we need the notion of a summation basis.

\begin{defin}[Summation bases and the space $u(G, \mathcal{B})$]
For a measurable subset $E$ of $\Gamma$ we define the operator $P_E$ {\rm(}at least on $L^2(G)${\rm)} by the formula
$$P_Ef = \mathcal{F}^{-1}(\chi_E\mathcal{F} f).$$
A subset of $\Gamma$ is said to be bounded if it has compact closure.
Let $\mathcal{B}$ be a family of bounded measurable subsets of $\Gamma$ such that for every compact set $K \subset \Gamma$ there exists $E \in \mathcal{B}$ with $K\subset E$. Such a system $\mathcal{B}$ will be called a summation basis. We define the space $u(G, \mathcal{B})$ to be the set of all  functions  $f\in L^1(G)\cap L^\infty (G)$ for which the norm $\|f\|_u = \sup_{B \in \mathcal{B}}{(\|P_B f\|_\infty + \|f\|_1)}$ is finite.   
\end{defin}

If $G$ is compact, we might drop the $L^1$-norm on the right in the last formula. When we talk about uniformly bounded partial Fourier integrals (or sums), we shall mean the finiteness of the norm
$\|\cdot\|_u$ for a certain fixed summation basis.  Surely, not all summation bases are expected to admit a Men$'$shov-type correction theorem. So, we impose a restriction on them taken from \cite{iv2010} and \cite{2018}.

Let $E$ be a bounded measurable subset of $\Gamma$. We say that a set $B\in\mathcal{B}$ \textit{splits} $E$ if the sets $E \cap B$ and $E \setminus B$ have positive Haar measure. (If $G$ is compact, this simply means that the two sets are nonempty.) Next, we denote by $E_{\mathcal{B}}$ an arbitrary representative of the lowest upper bound (in the complete lattice of measurable sets $\mod 0$) of the collection $\{B \in \mathcal{B}\colon B \; \mbox{splits} \; E\}$ (in symbols, with a slight abuse of notation: $E_{\mathcal{B}} = \cup\{B \in \mathcal{B}\colon B \, \mbox{splits} \, E\}$; if $G$ is compact, then the union in the last formula can be understood literally).

The restriction we are going to impose on a summation basis will depend on a sufficient pair in question.
Here it is.

\begin{defin}[Coordination of a summation basis and a sufficient pair.]
A summation basis $\mathcal{B}$ and a sufficient pair $(R, S)$ are said to be coordinated if for every bounded measurable set $E \subset \Gamma$ the pair $(R \setminus E_{\mathcal{B}}, S \setminus E_{\mathcal{B}})$ is also sufficient. 
\end{defin}

We give here two simple but important examples of summation bases coordinated with any sufficient pair. See \cite{iv2010, 2018} for more information. 

\begin{example}
Let $G$ be compact, infinite, and metrizable. Let $\mathcal{B}=\{B_n\}_{n\in\mathbb{N}}$ be an arbitrary strictly monotone increasing sequence of finite subsets of $\Gamma$ whose union is equal to $\Gamma$.
Then this collection is a summation basis coordinated with an arbitrary sufficient pair in $\Gamma$.

Indeed, under the above assumptions all compact sets in $\Gamma$ are finite, and it is easily seen that $E_{\mathcal{B}}$ is finite for every finite $E$. So it suffices to show that, whenever $(S,R)$ is a sufficient pair in $\Gamma$ and $A\subset\Gamma$ is finite, the pair $(S\setminus A, R\setminus A)$ is also sufficient. For this, taking a finite set $D\subset\Gamma$, we fix $\mu\in\Gamma$ (to be specified later), put $D_1=(-\mu+D)\cup (\mu+D)\cup D$ and find $\lambda\in\Gamma$ with $\lambda + D_1\subset S$ and $-\lambda + D_1\subset R$. Then for some choice of $\mu$ either $(\lambda + D)\cup (-\lambda + D)$ or $(\lambda +\mu + D)\cup (-\lambda -\mu + D)$ does not intersect $A$. For, otherwise both $\lambda$ and $\lambda +\mu$ belong to $H=(D-A)\cup (A-D)$, whence $\mu\in H-H$. Since $H$ is finite and $\Gamma$ is infinite, there is $\mu$ for which the last condition is violated.
\end{example}

\begin{example} The second example pertains to the case where $G$ is $\mathbb{R}^n$ or $\mathbb{T}^n$ (accordingly, $\Gamma$ is either $\mathbb{R}^n$ or $\mathbb{Z}^n$). A nonempty  subset $B$ of $\mathbb{R}^n$ or $\mathbb{Z}^n$ is said to be \textit{solid} if $y=(y_1,\ldots,y_n)\in B$ whenever $|y_j|\le |x_j|$ for $j=1,\ldots,n$ and $x=(x_1,\ldots,x_n)\in B$. \textit{We claim that the collection $\mathcal{B}$ of all solid sets constitutes a summation basis coordinated with an arbitrary sufficient pair $(S,R)$}.
  
To explain this, suppose  for definiteness that we work with $\Gamma =\mathbb{Z}^n$ . (The case of $\mathbb{R}^n$ is similar.) Taking a finite set $K\subset\mathbb{Z}^n$, for each $j=1,\ldots,n$ consider the smallest strip $D_j$ of the form $\{x\in\mathbb{Z}^n\colon |x_j|\le d\}$  that includes $K$, and let $D$ be the union of these strips. It is quite easy to realize that $K_{\mathcal{B}}\subset D$. Now, a simple direct inspection shows that $(S\setminus D, R\setminus D)$ is a sufficient pair. The figure illustrates the  case of $n=2$.
\end{example} 
\begin{center}
\includegraphics[scale=0.5]{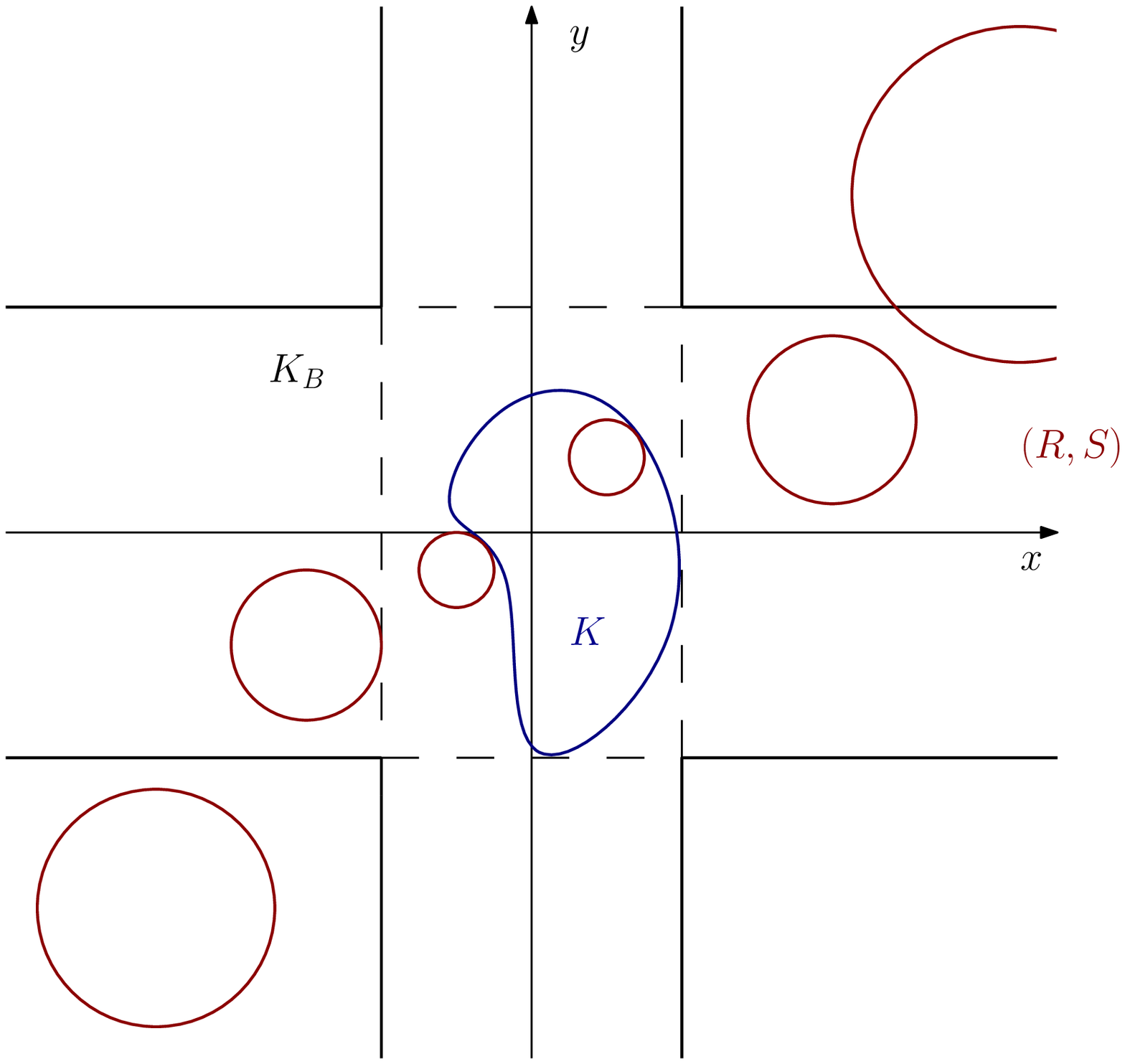}
\end{center}

We pass to precise statements of the results. In fact, we prove a ``weighted'' version of what was discussed in the Introduction. By a weight (maybe the term ``a rail'' would be more appropriate) we mean a uniformly continuous positive function $w$ on $\Gamma$ that is bounded and bounded away from zero. We will modify functions of the form $\chi_a w$  (instead of $\chi_a$) up to functions of the same form. For convenience, we assume that $w\le 1$ (this is merely a normalization condition). Suppose we are given a sufficient pair $(R,S)$ in $G$ and a summation basis $\mathcal{B}$ in $\Gamma$ coordinated with this sufficient pair; the norm $\|\cdot \|_u$ will be related to this summation basis.

\begin{theorem}\label{mainthm}
For  every $\varepsilon > 0$ and an arbitrary measurable subset $a$ of $G$ with $0<|a|<\infty$, there is a measurable subset $b$ of $G$ such that
\begin{enumerate}
\item $\int_{a \triangle b} w^2 <\varepsilon$,
\item the spectrum of $\chi_b w$ is included in $K\cup R\cup S$ for some compact set $K\subset\Gamma$ depending on $a$,
\item the norm $\|w\chi_b\|_u$ is finite.
\end{enumerate}
\end{theorem}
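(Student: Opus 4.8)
The plan is to run the Nazarov--Olevskii nonlinear scheme as an iteration that simultaneously controls three quantities: the $L^2(w^2\,dx)$-error, the spectrum, and the $\|\cdot\|_u$-norm of the partial approximants. First I would set up the basic building block: given a function $f$ supported (in the ``mass'' sense) near $a$, and given a large compact ``window'' $W\subset\Gamma$, I want to produce a perturbation $g$ with $\|f-g\|_{L^2(w^2)}$ small, with $\widehat g$ supported in $W\cup(\text{a shifted copy of a piece of }R\cup S)$, and crucially with a uniform bound on $\sup_{B\in\mathcal B}\|P_B g\|_\infty$. The Nazarov--Olevskii trick — multiplying by a high-frequency modulation and using a randomized/combinatorial selection to push the bulk of the spectrum far out while keeping the function essentially an indicator times the weight — produces exactly such a $g$; the sufficiency of $(R,S)$ is what lets us place the ``far out'' part inside $R\cup S$ after the appropriate translation, and the coordination hypothesis is what guarantees that this remains possible even after we have deleted the (bounded) set $E_{\mathcal B}$ where the summation basis is ``active'' on the current approximant.

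The second step is the iteration itself. I would build $b$ as a limit of sets $b_n$ (equivalently, $\chi_{b_n}w$ as a limit of functions $f_n$), where at stage $n$ we correct the residual error using the building block with a window $W_n$ chosen so large that $W_n$ swallows $K\cup(\bigcup_{m<n}\text{supp}\,\widehat{f_m})$; the new spectral mass that is \emph{not} already inside the previously built compact part gets parked inside $R\cup S$. One arranges $\|f_{n+1}-f_n\|_{L^2(w^2)}\le \varepsilon 2^{-n-2}$ so that $f_n\to f_\infty=\chi_b w$ in $L^2(w^2)$ and $\int_{a\triangle b}w^2<\varepsilon$, giving (1). Because at each stage only finitely much spectral mass is ``left behind'' in a compact set, and the compact sets are nested, the total spectrum lands in $K\cup R\cup S$ with $K=\overline{\bigcup_n K_n}$ — but one must check $K$ stays compact, i.e. that the leftover compact pieces stabilize; this is where the ``finite dimension'' of $G$ and the standard bookkeeping from \cite{menshov} enter, ensuring the tails genuinely escape to infinity rather than accumulating. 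This yields (2).

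For (3), the point is that $\|\cdot\|_u$ involves $\sup_{B\in\mathcal B}\|P_B(\cdot)\|_\infty$, and $P_B$ only ``sees'' frequencies in the bounded set $B$; since $B$ is bounded and the new spectral increments at stage $n$ live either in a fixed compact set or far out in $R\cup S$ (beyond $B$ for all but finitely many stages), the operators $P_B$ applied to $\chi_b w$ reduce, up to a fixed finite sum of harmless terms, to $P_B$ applied to the ``stable compact part,'' on which we have an a priori $L^\infty$ bound from the building block. One makes this precise by carrying, along the iteration, a uniform estimate $\sup_{B\in\mathcal B}\|P_B f_n\|_\infty\le C$ with $C$ independent of $n$: each correction step must not inflate this, which is exactly the reason the building block is required to deliver an $L^\infty$-bound on \emph{all} partial sums, not merely smallness in $L^2(w^2)$. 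Here the coordination of $\mathcal B$ with $(R,S)$ is used again: the set $E_{\mathcal B}$ attached to the current approximant is bounded, so it can be absorbed into the window $W_n$, and what is placed in $(R\setminus E_{\mathcal B})\cup(S\setminus E_{\mathcal B})$ genuinely lies outside every $B\in\mathcal B$ that splits the current approximant, which is what keeps the $P_B$-norms from blowing up.

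The main obstacle I expect is the simultaneous control in step one: classical Nazarov--Olevskii and the correction theorem of \cite{menshov} each give you \emph{some} of what is wanted, but getting a single perturbation that is (a) an indicator times $w$ (not an arbitrary function), (b) small in $L^2(w^2)$, (c) spectrally thin, and (d) uniformly bounded in \emph{all} partial Fourier integrals at once requires genuinely modifying the nonlinear construction — in particular choosing the modulation frequencies and the combinatorial selection so that the resulting function's partial sums do not develop large $L^\infty$-norms, which is precisely the difficulty the authors flag as having defeated the method of \cite{second}. I would expect the heart of \S2 to be a careful quantitative version of the building block in which the modulation is taken so high-frequency (relative to the current window $W_n$) that for every $B\in\mathcal B$ the operator $P_B$ either misses the modulated part entirely or captures it essentially completely, so that $P_B(\chi_b w)$ is always close to either an old partial sum or the full new block, both of which are $L^\infty$-controlled.
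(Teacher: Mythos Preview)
Your high-level intuition about coordination and the ``either captures or misses'' mechanism for $\|\cdot\|_u$ is right, and that is indeed what drives claim (3) in the paper. But the core of your plan --- steps one and two --- carries a real gap, because it misrepresents how the Nazarov--Olevskii scheme produces an indicator.

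The intermediate approximants are \emph{not} indicator functions. In the paper the iteration builds a triangular array $\{f_k^{(n)}\}$ of functions satisfying only $0\le f_k^{(n)}\le t_n w$; none of them is of the form $\chi_{b_n}w$. The passage to an indicator happens \emph{only in the limit}, and not by arranging geometric decay $\|f_{n+1}-f_n\|\le\varepsilon 2^{-n-2}$ of the increments. Rather, the increments $\widetilde h_n=\Re\sum_i c_i\beta_i\gamma_i$ are mutually orthogonal with uniformly bounded $L^2$-norm (controlled via the fixed integral $\int_a w$), so the column limits $F_k$ exist and form partial sums of a convergent orthogonal series. The punchline is the nonlinear step: one shows $\|\widetilde h_k\|_2\asymp\|g_k\|_2\to 0$, where $g_k=f_{k-1}^{(k-1)}\bigl(1-f_{k-1}^{(k-1)}/(t_{k-1}w)\bigr)$, and $g_k\to 0$ a.e.\ forces the limit $F$ to take only the values $0$ and $tw$, i.e.\ $F=t\chi_b w$. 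If you try to make each $f_n$ already an indicator times $w$ and take small steps between them, you lose exactly this mechanism; there is no obvious way to do a small-in-$L^2$ perturbation from one indicator-times-$w$ to another while simultaneously pushing spectrum to infinity.

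A related misconception: the compact set $K$ does not grow. It is fixed once and for all as the spectrum of the initial smoothing $f_0^{(0)}=(\chi_a w)\ast\Phi_{U_0}$, and \emph{every} subsequent spectral increment is placed inside $R\cup S$ (disjointly from all previous spectra and from the $E_{\mathcal B}$ of all previous spectra). So there is no issue of ``leftover compact pieces'' accumulating; your worry there dissolves once the iteration is set up correctly. The finite dimension of $G$ enters not here but in the partition-of-unity step (bounding the overlap multiplicity of the $\alpha_i$ by $2^{\dim G}$), which is what converts $\|h_n\|_2$-control into the summand-wise estimate needed to compare $\|\widetilde h_k\|_2$ with $\|g_k\|_2$.
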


\begin{remark}\label{unity}
The facts discussed in the Introduction follow if $w$ is identically equal to $1$.  In this case it can also be ensured that $|b|=|a|$.
\end{remark}

\begin{remark}\label{estimate}
The claim that $\|w \chi_b\|_u$ is finite can be supplemented with the inequality $\|P_B(w \chi_b)\|\le C$ with $C$ depending only on $\dim G$ whenever $B\in\mathcal{B}$ and $B\supset K$ (this will be verified in the course of the proof).
\end{remark}

Neither the compact set $K$ nor the norms of the $P_B(\chi_b w)$ where $B$ splits $K$ are under control in general, we only know a uniform bound for these norms depending on $a$. However, in some specific cases the estimate can be refined. For example, this is true for the groups $\mathbb{R}$ and $\mathbb{T}$,  the standard summation bases $\{[-N, N], n \in \mathbb{N}\}$ for the circle and $\{[-M, M], M \in \mathbb{R}\}$ for the real line (by the way, these are precisely the bases of solid sets mentioned in Example 1.2) and an arbitrary sufficient pair.

\begin{theorem}\label{line}
Under the assumptions listed in the preceding paragraph, the set $b$ as in Theorem {\rm\ref{mainthm}} can be chosen in such a way that $\|\chi_bw\|_u\le C\log (2+\varepsilon^{-1}\int_aw)$. Here C is a universal constant.
\end{theorem}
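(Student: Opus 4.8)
Put $L\df\log\bigl(2+\varepsilon^{-1}\int_aw\bigr)$. The plan is to run the Men$'$shov-type iterative correction that will be used to prove Theorem \ref{mainthm}, keeping track of the constants in the two cases at hand; the point is that here the iteration can be arranged to use only $O(L)$ stages, and that the compact set $K$---although still of uncontrolled size---does not spoil the partial-sum estimate. We may assume $\varepsilon\le\int_aw^{2}$, since otherwise $b=\varnothing$ satisfies (1)--(3) with $\|\chi_{\varnothing}w\|_{u}=0$; then $\varepsilon^{-1}\int_aw^{2}\ge1$, so that with $N\df\lceil\log_{2}(\varepsilon^{-1}\int_aw^{2})\rceil+1$ we have $N\le CL$ (recall $w\le1$). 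The feature of $\mathbb{R}$ and $\mathbb{T}$ that we exploit is that the standard basis consists of the solid sets $[-M,M]$ (respectively $[-M,M]\cap\mathbb{Z}$), for which, by Example 1.2, $E_{\mathcal B}$ is just the smallest symmetric interval containing the bounded set $E$; hence the coordination conditions and the spectral bookkeeping in the proof of Theorem \ref{mainthm} become entirely explicit, and removing $E_{\mathcal B}$ from $(R,S)=\bigl(-\bigcup_{k}I_{k},\bigcup_{k}I_{k}\bigr)$ always leaves a sufficient pair of the same shape.

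Carrying this out, one produces, as in Theorem \ref{mainthm} but now in exactly $N$ stages, auxiliary functions $f_{0}=\chi_aw,f_{1},\dots,f_{N}$ with $|f_{k}|\le w$ and $\int|f_{k+1}|^{2}\le\tfrac12\int|f_{k}|^{2}$, together with settled corrections $s_{k}$ with $f_{k}=s_{k}+f_{k+1}$; each $s_{k}$ is produced by one application of the Nazarov--Olevskii-type construction at a single scale, its spectrum lying in a set $\Sigma_{k}$---these nested, $\Sigma_{0}\subset\Sigma_{1}\subset\cdots$---consisting of a symmetric interval together with finitely many of the intervals $\pm I_{j}$, and it is arranged that $\|s_{k}\|_{\infty}\le C$ and, crucially, $\|P_{B}s_{k}\|_{\infty}\le C(\dim G)$ for every $B\in\mathcal B$. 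After these $N$ stages $\int|f_{N}|^{2}\le2^{-N}\int_aw^{2}<\varepsilon$, and a final correction at a bounded scale absorbs $f_{N}$ together with the indicator adjustment needed to turn $\sum_{k<N}s_{k}+f_{N}$ into a genuine function $\chi_bw$, at the cost of $O(1)$ more in $\|\cdot\|_{u}$. By construction $\int_{a\triangle b}w^{2}<\varepsilon$, which is (1), while the spectrum of $\chi_bw$ lies in $K\cup R\cup S$ with $K$ the closure of $\Sigma_{N-1}$ together with the bounded part of the last correction, which is (2).

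It remains to estimate $\|\chi_bw\|_{u}$. For $B\in\mathcal B$ with $B\supset K$ one has $\|P_{B}(\chi_bw)\|_{\infty}\le C(\dim G)$ by Remark \ref{estimate}. Otherwise $B=[-M,M]$ splits $K$; write $\chi_bw=\sum_{k<N}s_{k}+r$ with $\|P_{B}r\|_{\infty}\le C$. Since the $\Sigma_{k}$ are nested, the indices with $\Sigma_{k}\subset B$ form an initial segment $k\le k^{*}$, and by the telescoping $f_{k}=s_{k}+f_{k+1}$ their partial sum equals $\chi_aw-f_{k^{*}+1}$, which is $\le2$ in $L^{\infty}$ and has spectrum inside $B$; each of the remaining (at most $N$) indices contributes $\|P_{B}s_{k}\|_{\infty}\le C(\dim G)$. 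Hence $\|P_{B}(\chi_bw)\|_{\infty}\le2+C(\dim G)\,N+C$, so that $\sup_{B\in\mathcal B}\|P_{B}(\chi_bw)\|_{\infty}\le CN\le C'L$; for $G=\mathbb{T}$ the $\|\cdot\|_{1}$-term is absent from $\|\cdot\|_{u}$, while for $G=\mathbb{R}$ one has in addition $\|\chi_bw\|_{1}\le\int_aw+\varepsilon$. The main obstacle is the one-scale step: producing at each stage a Nazarov--Olevskii-type correction $s_{k}$ that halves $\int|f_{k}|^{2}$, keeps $|f_{k+1}|\le w$, has spectrum in the prescribed nested block (hence ultimately in $K\cup R\cup S$), and---the new quantitative requirement---satisfies $\|P_{B}s_{k}\|_{\infty}\le C(\dim G)$ for all $B\in\mathcal B$ uniformly, no matter how large the scale at which $s_{k}$ operates; once such a one-scale lemma is available, the bookkeeping above yields $\|\chi_bw\|_{u}\le C\log\bigl(2+\varepsilon^{-1}\int_aw\bigr)$.
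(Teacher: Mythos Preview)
Your approach is fundamentally different from the paper's and contains genuine gaps.

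The paper does \emph{not} obtain the logarithmic bound by counting stages of the Nazarov--Olevskii iteration. Instead, it observes (Subsection~2.5) that the only obstruction to a uniform bound on $\|P_B(\chi_b w)\|_\infty$ is the case where $B$ splits the spectrum of $f_0^{(0)}$; for all other $B$ the bound is already an absolute constant, not $O(N)$. The paper then eliminates this single obstruction by \emph{pre-correcting} $\chi_a w$: it invokes a sharp correction theorem (Theorem~\ref{correction}, whose applicability to $u(\mathbb{R})$ and $u(\mathbb{T})$ rests ultimately on the Carleson almost-everywhere convergence theorem) to produce $\widetilde f=\varphi\cdot\chi_a w$ with $0\le\varphi\le1$, $|\{\varphi\ne1\}|\le\varepsilon$, and $\|\widetilde f\|_u\le C\log(2+\varepsilon^{-1}\int_a w)$. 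One then sets $f_0^{(0)}=\widetilde f*\Phi_{U_0}$ and runs the \emph{same infinite} iteration as in Theorem~\ref{mainthm}. No finite truncation and no stage-counting are involved.

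Your proposal, by contrast, leaves the essential steps open. First, you explicitly flag the ``one-scale lemma'' as ``the main obstacle'' and do not prove it; this is not a detail but the heart of the matter. Second, the iteration you describe, with $f_0=\chi_a w$ and $\int|f_{k+1}|^2\le\tfrac12\int|f_k|^2$, is \emph{not} the iteration of Theorem~\ref{mainthm}: there the iterates $f_k^{(k)}$ have \emph{increasing} $L^2$-norms (they converge to $t\chi_b w$, and the increments $\widetilde h_k$ are orthogonal), and the paper establishes no geometric rate for the quantity that does tend to zero. Third, and most seriously, the line ``a final correction at a bounded scale absorbs $f_N$ together with the indicator adjustment needed to turn $\sum_{k<N}s_k+f_N$ into a genuine function $\chi_b w$, at the cost of $O(1)$ more in $\|\cdot\|_u$'' hides precisely the difficulty the entire construction is designed to overcome: producing an \emph{indicator} function with controlled $u$-norm is the whole point, and in the paper this is achieved only in the limit of the full infinite iteration. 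There is no mechanism given here for performing such an ``indicator adjustment'' at bounded $u$-cost.
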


\begin{remark}\label{dim0}
The same is true for the dyadic group $D=\{-1,1\}^{\mathbb{N}}$ if we mean a uniform bound for the partial sums of Walsh-Fourier series under the standard enumeration of the Walsh system. Again, a sufficient pair can be taken arbitrarily (note that $1=-1$ in the dual group of $D$, so the notion itself of a sufficient pair simplifies in this case). Moreover, the Walsh system here can be replaced with bounded Vilenkin systems. See Subsection 2.6 for some more information.
\end{remark}

Theorem \ref{line} will allow us to deduce our final result, which is not confined to characteristic functions, and is apparently new. It holds for the groups $\mathbb{R}$ and $\mathbb{T}$ with the standard summation bases and arbitrary sufficient pairs, and also for certain zero-dimensional compact groups; see Subsection 2.6 for the discussion. To a certain extent, this correction theorem absorbs all developments known previously: the modified function has both thin spectrum and bounded Fourier integrals, and obeys a sharp estimate like in Theorem {\rm\ref{line}}. We give the statement for the group $\mathbb{R}$ for definiteness (and with a slightly weaker inequality for the norm $\|\cdot\|_u$ than in Subsection 2.6).

\begin{theorem}\label{fin}
Let $\varepsilon > 0$. Given a function $h\in L^{\infty}(\mathbb{R})$ supported on a set of finite measure and with $\|h\|_{\infty}\le 1$, there is a function $f$ such that $\|f\|_\infty\le 20$, $|\{h\ne f\}|\le\varepsilon$, and $\|f\|_u < C\log (2+\varepsilon^{-1}|\supp h|)$. Furthermore, the spectrum of $f$ is included in $K\cup R\cup S$, where $K$ is a compact set depending on $h$.
\end{theorem}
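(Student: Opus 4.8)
The plan is to deduce Theorem~\ref{fin} from Theorem~\ref{line} by expanding $h$ into an infinite dyadic series of indicator functions, correcting each indicator separately, and adding the results. Two preliminary reductions come first. By inner regularity of Lebesgue measure, fix a compact set $E'\subset\supp h$ with $|\supp h\setminus E'|<\varepsilon/2$ and replace $h$ by $h\chi_{E'}$; this spends $\varepsilon/2$ of the error budget, does not increase $\|h\|_\infty$ or $|\supp h|$, and makes the support compact — a point that will matter for the control of the compact set $K$. Then, writing $h=(\Re h)_+-(\Re h)_-+i\big((\Im h)_+-(\Im h)_-\big)$, it suffices to correct a single function $v$ with $0\le v\le 1$ and $\supp v\subset E'$ (with a fixed fraction of the remaining budget), and to recombine the four corrected pieces at the end. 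Since off the four (small) exceptional sets these pieces reproduce $(\Re h)_\pm$ and $(\Im h)_\pm$ \emph{exactly}, the recombination is automatically bounded pointwise by a small absolute constant, well under $20$, and coincides with $h$ off the union of the four exceptional sets.

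For $v$ as above, write its binary expansion $v=\sum_{j\ge 1}2^{-j}\chi_{e_j}$, where $e_j=\{x:\text{the }j\text{th binary digit of }v(x)\text{ is }1\}$, so $e_j\subset\{v>0\}\subset E'$. For each $j$ with $|e_j|>0$ apply Theorem~\ref{line} to $\chi_{e_j}$ (with weight $w\equiv 1$) and parameter $\varepsilon_j:=c\,\varepsilon\,2^{-j}$ for a small fixed $c$, obtaining $b_j$ with $|e_j\triangle b_j|<\varepsilon_j$, with the spectrum of $\chi_{b_j}$ contained in $K_j\cup R\cup S$ for some compact $K_j$, and with $\|\chi_{b_j}\|_u\le C\log\!\big(2+\varepsilon_j^{-1}|e_j|\big)$ (put $b_j=\emptyset$ if $e_j=\emptyset$). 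Set $\phi:=\sum_{j\ge 1}2^{-j}\chi_{b_j}$. Then $\|\phi\|_\infty\le 1$; off $\bigcup_j(e_j\triangle b_j)$, a set of measure $\le\sum_j\varepsilon_j$, every term of $v-\phi$ vanishes, hence $\phi=v$ there; and the spectrum of $\phi$ lies in $\big(\bigcup_j K_j\big)\cup R\cup S$. Finally, subadditivity of $\|\cdot\|_u$ gives $\|\phi\|_u\le\sum_j 2^{-j}\|\chi_{b_j}\|_u$, and the choice $\varepsilon_j\sim\varepsilon 2^{-j}$ yields $\log\!\big(2+\varepsilon_j^{-1}|e_j|\big)\le Cj+\log\!\big(2+\varepsilon^{-1}|\supp h|\big)$; since $\sum_j j2^{-j}<\infty$ this produces the sharp estimate $\|\phi\|_u\le C'\log\!\big(2+\varepsilon^{-1}|\supp h|\big)$. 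Recombining the four pieces gives the function $f$ with all the properties claimed.

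So the quantitative sharpness of the $\|\cdot\|_u$-bound is not the hard part: it is forced by the geometric choice of the $\varepsilon_j$, because the $j$th dyadic layer costs only $O(j)$ in Theorem~\ref{line}. The genuinely delicate point is that Theorem~\ref{fin} demands a \emph{single} compact set $K$, whereas the construction above produces countably many sets $K_j$, and one must show that they all lie in one compact set. This is exactly where the reduction to compactly supported $h$ is needed: every $K_j$ arises by applying Theorem~\ref{line} to a measurable subset of the fixed compact set $E'$, and a look at the construction underlying Theorems~\ref{mainthm}--\ref{line} shows that the compact set it furnishes can be chosen uniformly over all such subsets. One may add, using Remark~\ref{estimate} together with subadditivity, that for $B\in\mathcal B$ containing this common $K$ the norm $\|P_B\phi\|$ is bounded by an absolute constant, the logarithmic factor being contributed only by the $B$ that split $K$; this matches the form of the stated estimate.
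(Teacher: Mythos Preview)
Your dyadic-layer approach is quite different from the paper's, and the quantitative part (the $\|\cdot\|_u$-bound, the exceptional-set bound, the sup-norm bound) is carried out correctly. The genuine gap is exactly the point you flag yourself: the claim that ``the compact set [Theorem~\ref{line}] furnishes can be chosen uniformly over all such subsets'' of $E'$ is not justified, and in fact fails. In the construction, $K_j=U_0^{(j)}+(-U_0^{(j)})$, where $U_0^{(j)}$ must be chosen so that $\|\widetilde f_j*\Phi_{U_0^{(j)}}-\widetilde f_j\|_1<\rho_0^{(j)}$, and the correction estimate $|e_j\triangle b_j|<\varepsilon_j$ forces $\rho_0^{(j)}$ to be of order $\varepsilon_j\sim\varepsilon\,2^{-j}\to 0$. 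No fixed $U_0$ gives arbitrarily good $L^1$-approximation by Fej\'er means uniformly over all bounded measurable functions supported in $E'$ (take $e_j$ to be a high-frequency set such as $\{x\in E':\sin(Mx)>0\}$ to see that the error stays bounded away from $0$ for any fixed $U_0$). Hence the $K_j$ may grow without bound, and $\bigcup_jK_j$ need not have compact closure. The support condition $e_j\subset E'$ buys nothing here; what matters is the precision, which deteriorates with $j$.

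The paper sidesteps this entirely by exploiting the \emph{weighted} form of Theorem~\ref{line}, which is precisely why weights were introduced. After the same reduction to a nonnegative $v$ continuous on a compact $a\subset\supp h$, one extends $v|_a$ to a uniformly continuous $v$ on $\mathbb{R}$ with $0\le v\le 1$ and applies Theorem~\ref{line} \emph{twice} to the single set $a$, once with $w_1=v+1$ and once with $w_2\equiv 1$, obtaining $b_1,b_2$; the corrected function is $f=\chi_{b_1}w_1-\chi_{b_2}w_2$. On the good set one has $\chi_{b_1}=\chi_{b_2}=\chi_a$, so $f=\chi_a v=h$ there; $\|f\|_\infty\le 3$; the $\|\cdot\|_u$-bound follows from two applications of the logarithmic estimate; and the spectrum lies in $(K_1\cup K_2)\cup R\cup S$ with $K_1\cup K_2$ compact because only \emph{finitely many} invocations of Theorem~\ref{line} are made. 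Your infinite decomposition trades this finiteness away, and without an additional idea to pin down the spectra of the individual layers it cannot recover a single compact $K$.
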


%At the end of the paper, we shall prove a new correction theorem that is not confined to characteristic functions. It is based on the above weighted results. We postpone the statement and its discussion to the final Subsection 2.6.

\section{Proofs}
\subsection{Approximate identities}
We need a certain analog of the Fej{\'e}r kernels for our group $G$. Let $U$  be a compact symmetric neighborhood of zero in the dual group $\Gamma$.  We put $\psi_U = (|U|^{-1/2}\chi_{U}) * (|U|^{-1/2}\chi_{U})$. This is a continuous function on $\Gamma$ with values in $[0, 1]$, supported on the compact set $K = U + U$, and satisfying $\psi_{U}(0) = 1$. Define $\Phi_{U} = \mathcal{F}^{-1}\psi_{U}$; then $\|\Phi_{U}\|_1 = 1$ and $\Phi_{U}\ge 0$. In this subsection, the assumption that $\dim G<\infty$ is not required.

\begin{lemma}\label{Fe}
For a certain family of neighborhoods $U$ of zero in $\Gamma$, the corresponding functions $\Phi_{U}$ form an approximate identity\footnote{By definition, this means that the operators of convolution with these functions converge pointwise  to the identity on $L^p(G)$, $1\le p<\infty$.}  for $G$. 
\end{lemma}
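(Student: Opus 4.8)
The plan is to pick the neighborhoods $U$ so that $\psi_U\to1$ uniformly on compact subsets of $\Gamma$; granting this, the approximate identity property will drop out by soft arguments, the point being that $\|\Phi_U\|_1=1$ makes convolution with $\Phi_U$ a contraction of each $L^p(G)$, $1\le p<\infty$ (Young's inequality). The only substantive issue is the choice of the family.

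For that choice I would note that, writing $g_U=|U|^{-1/2}\chi_U$, one has $\psi_U=g_U*g_U$, hence $\psi_U(\gamma)=|U|^{-1}|U\cap(\gamma+U)|$ and therefore $0\le1-\psi_U(\gamma)\le|U|^{-1}\bigl|U\,\triangle\,(\gamma+U)\bigr|$. So it suffices to produce, for every compact $K\subset\Gamma$ and every $\delta>0$, a compact symmetric neighborhood $U$ of $0$ with $\sup_{\gamma\in K}|U\,\triangle\,(\gamma+U)|<\delta|U|$, i.e. a F{\o}lner set that is at the same time a neighborhood of $0$. Such sets exist in every locally compact Abelian group: $K$ lies in some open, compactly generated subgroup $\Gamma_0\cong\mathbb{R}^a\times\mathbb{Z}^b\times C$ with $C$ compact, and one can take $U$ to be a large box $[-n,n]^a\times\{-n,\dots,n\}^b\times C$, which is a compact symmetric neighborhood of $0$ in $\Gamma_0$ — hence in $\Gamma$, since $\Gamma_0$ is open — for which the displayed ratio tends to $0$ as $n\to\infty$, uniformly for $\gamma$ in a prescribed compact subset of $\Gamma_0$. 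Indexing these $U$'s by the pairs $(K,\delta)$, ordered by $K\subseteq K'$ and $\delta\ge\delta'$, produces a net along which $\psi_U\to1$ uniformly on compacta.

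It then remains to convert local uniform convergence of the $\psi_U$ into concentration of the $\Phi_U$ near $0\in G$, and to conclude. For $\phi\in C_c(\Gamma)$ the multiplication formula gives $\int_G\Phi_U\,\eta_\phi=\int_\Gamma\psi_U\,\phi$, where $\eta_\phi\in\mathcal F^{-1}(C_c(\Gamma))$ and $\eta_\phi(0)=\int_\Gamma\phi$; the right-hand side tends to $\int_\Gamma\phi$ by dominated convergence, so $\int_G\Phi_U\,\eta\to\eta(0)$ for every $\eta\in\mathcal F^{-1}(C_c(\Gamma))$. Since $\bigl|\int_G\Phi_U\,\eta\bigr|\le\|\Phi_U\|_1\|\eta\|_\infty\le\|\mathcal F\eta\|_{L^1(\Gamma)}$ and $|\eta(0)|\le\|\mathcal F\eta\|_{L^1(\Gamma)}$, the functionals $\eta\mapsto\int_G\Phi_U\,\eta-\eta(0)$ are uniformly bounded on the Fourier algebra $A(G)=\mathcal F^{-1}(L^1(\Gamma))$, and hence tend to $0$ on all of $A(G)$ by density. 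Applying this to $\eta=\Psi_V:=(|V|^{-1/2}\chi_V)*(|V|^{-1/2}\chi_V)$ for a compact symmetric neighborhood $V$ of $0$ in $G$ — so that $0\le\Psi_V\le1$, $\Psi_V(0)=1$, $\supp\Psi_V\subset V+V$ and $\Psi_V\in A(G)$ — gives $\int_{V+V}\Phi_U\ge\int_G\Phi_U\,\Psi_V\to1$, whence $\int_{G\setminus(V+V)}\Phi_U\to0$; since the sets $V+V$ form a neighborhood basis at $0$ in $G$, the mass of $\Phi_U$ escapes every fixed neighborhood of $0$. Combined with $\Phi_U\ge0$ and $\|\Phi_U\|_1=1$, this gives $\|\Phi_U*f-f\|_p\to0$ for every $f\in L^p(G)$, $1\le p<\infty$, via the standard splitting of $\int_G\Phi_U(y)\|f(\cdot-y)-f\|_p\,dy$ over a small neighborhood of $0$ (continuity of translation in $L^p$) and its complement. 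The hard part is the second step: realizing that a \emph{band-limited} approximate identity forces the sets $U$ to exhaust $\Gamma$, and that the enlarging sets needed for this are exactly F{\o}lner sets — which is where amenability of $\Gamma$ comes in; everything after $\psi_U\to1$ is routine.
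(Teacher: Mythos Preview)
Your argument is correct. The core idea---choose $U$ so that $\psi_U\to1$ uniformly on compacta, and for that pick $U$ as a large box in a piece of $\Gamma$ isomorphic to $\mathbb{R}^a\times\mathbb{Z}^b\times(\text{compact})$---is the same one the paper uses, but the packaging differs. The paper applies the structure theorem to $G$ (splitting off $\mathbb{R}^k$ and a compact open subgroup $G_1$), treats the two factors separately, and verifies the approximate identity on the dense set of trigonometric polynomials in the compact case; you instead apply the structure theorem directly to a compactly generated open subgroup of $\Gamma$ containing a given compact set, identify the relevant condition on $U$ as the F{\o}lner property, and then pass from $\psi_U\to1$ to mass concentration of $\Phi_U$ via a clean duality argument through $A(G)$. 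Your route is somewhat more uniform (no case split on the $G$ side) and makes the ``soft'' half of the argument---getting from the multiplier picture to $L^p$ convergence---fully explicit, whereas the paper leaves that to the reader; on the other hand, the paper's direct check on characters is marginally quicker once one accepts the reduction to the two model groups. Substantively, the two proofs produce the same family of neighborhoods $U$ and appeal to the same structural fact, so the difference is one of organization rather than of method.
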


The claim is standard but not quite straightforward because the invocation of the structure theorem seems to be obligatory for the proof. Next, surprisingly, we have not found precisely this statement in standard handbooks. So, for completeness, we sketch the arguments. By the structure theorem (see, e.g., \cite{book}), $G$ splits in the direct product of $\mathbb{R}^k$ and a group containing an open compact subgroup $G_1$. It is quite easy to see that it suffices to prove the claim separately for $\mathbb{R}^k$ and $G_1$. The group $\mathbb{R}^k$ presents no problems (when $U$ runs through the family of cubes centered at zero, we obtain the family of genuine multiple Fej{\'e}r kernels on $\mathbb{R}^k$). We will see that the case of the compact group $G_1$ reduces to considering similar cubes, this time in $\mathbb{Z}^s$ for some $s$. Denote by $\Gamma_1$ the (discrete) dual of $G_1$. Since the operators of convolution with  $\Phi_{U}$ have norm at most one on $L^1(G_1)$, it suffices, given a finite set $C$ of characters on $G_1$, to find $U$ such that this convolution operator is as close to the identity on $C$ as we wish. This means that we must find a symmetric finite subset $U$ of $\Gamma_1$ containing zero such that the function $\psi_U$ is very close to $1$ on $C$.

Now, let $\Delta$ be the subgroup of $\Gamma_1$ generated by $C$. Since $C$ is finite, $\Delta$ is a direct sum of finitely many cyclic groups, whence $\Delta=\mathbb{Z}^s\oplus\Omega$ for some finite Abelian group $\Omega$. If $s=0$, take $\Omega$ for $U$. Otherwise, take a large cube $Q$ centered at zero in $\mathbb{Z}^s$ and put $U=Q\oplus\Omega$. A short reflection shows that the required property of $\psi_U$ is ensured as soon as the diameter of $Q$ is much greater then the diameter of the projection of $C$ to $\mathbb{Z}^s$, and we are done.

\begin{lemma}\label{focus}
Given a compact neighborhood $V$ of zero in $G$ and $\varepsilon >0$, there exists a compact symmetric neighborhood $U$ of zero in $\Gamma$ such that $\int_{G\setminus V}\Phi_U<\varepsilon$.
\end{lemma}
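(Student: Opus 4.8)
The plan is to reduce, once again, to the structural decomposition $G\cong\mathbb{R}^k\times H$, where $H$ contains an open compact subgroup $G_1$, and on the dual side $\Gamma\cong\mathbb{R}^k\times\Gamma'$. First I would observe that it suffices to treat the two factors separately: if $U=U_1\times U_2$ with $U_1\subset\mathbb{R}^k$ and $U_2\subset\Gamma'$, then $\Phi_U=\Phi_{U_1}\otimes\Phi_{U_2}$, and $\int_{G\setminus V}\Phi_U\le\int_{\mathbb{R}^k\setminus V_1}\Phi_{U_1}+\int_{H\setminus V_2}\Phi_{U_2}$ once $V\supset V_1\times V_2$; so it is enough to make each term small. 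For $\mathbb{R}^k$ this is the classical fact that the Fej\'er kernels concentrate near the origin, which follows from $\Phi_{U_1}\ge0$, $\|\Phi_{U_1}\|_1=1$, and explicit computation (or from Lemma~\ref{Fe} together with the fact that convolution with an approximate identity of a fixed bump supported near $0$ stays near that bump). So the whole issue is the compact piece.

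For the compact group $G_1$ (with discrete dual $\Gamma_1$), the key step is to choose the neighborhood $U\subset\Gamma_1$ so that $\psi_U$ is not merely close to $1$ on a prescribed finite set of characters, as in Lemma~\ref{Fe}, but genuinely close to $1$ \emph{in an averaged/$L^1$ sense} that forces $\Phi_U$ to concentrate on $V$. Concretely: since $\Phi_U\ge0$ and $\int_{G_1}\Phi_U=1$, we have $\int_{G_1\setminus V}\Phi_U=1-\int_V\Phi_U$, so we must show $\int_V\Phi_U$ can be made $>1-\varepsilon$. Pick a nonnegative $g\in C(G_1)$ with $\supp g\subset V$, $g(0)>0$, normalized so that $\widehat g\ge 0$ (e.g.\ $g=|W|^{-1}\chi_W*\chi_{-W}$ for a small symmetric $W$ with $W+W\subset V$, giving $\widehat g=|W|^{-1}|\widehat{\chi_W}|^2\ge0$). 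Then $\int_V\Phi_U\ge\int_{G_1}g(x)\Phi_U(x)\,dx/\|g\|_\infty=\sum_{\gamma\in\Gamma_1}\widehat g(\gamma)\psi_U(\gamma)\big/\|g\|_\infty$, by Parseval and nonnegativity of both factors. Since $\sum_\gamma\widehat g(\gamma)=g(0)$ and $g(0)/\|g\|_\infty$ can be taken $\ge 1-\varepsilon/2$ (choosing $W$ so that $g$ is nearly flat near $0$, or just arranging $g(0)=\|g\|_\infty$ by symmetry), it remains to choose $U$ so large that $\psi_U\ge 1-\varepsilon/2$ on a finite set carrying all but $\varepsilon/2$ of the mass $\sum_\gamma\widehat g(\gamma)$; this is exactly the statement proved inside Lemma~\ref{Fe} (large cube $Q$ in the $\mathbb{Z}^s$-part, full $\Omega$ in the finite part, relative to that finite set). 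Combining gives $\int_V\Phi_U\ge(1-\varepsilon/2)^2>1-\varepsilon$.

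The main obstacle I anticipate is purely bookkeeping: making the product decomposition and the choice of the auxiliary bump $g$ compatible with an \emph{arbitrary} given neighborhood $V$ (which need not be a product neighborhood), and ensuring that the cube $Q$ chosen for $\psi_U$ depends only on the finite support of $\widehat g$ and not circularly on $V$. One handles this by first shrinking $V$ to a product neighborhood $V_1\times V_2$, then fixing $g$ (hence its finitely supported transform), and only then selecting $U$; no step requires the finite dimensionality of $G$, consistently with the remark preceding Lemma~\ref{Fe}.
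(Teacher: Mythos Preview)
Your argument is correct, but it takes a substantially longer route than the paper's. The paper observes that Lemma~\ref{focus} is a \emph{formal} consequence of Lemma~\ref{Fe}, valid for any approximate identity of nonnegative kernels with unit integral: pick a symmetric neighborhood $W$ with $W-W\subset V$, set $f=|W|^{-1}\chi_W$, choose $U$ with $\|f-f*\Phi_U\|_1<\varepsilon$, and then Fubini turns $\int_{G\setminus W}f*\Phi_U<\varepsilon$ into $\int_G\Phi_U(t)\,|W|^{-1}|(W+t)\setminus W|\,dt<\varepsilon$, where the integrand equals $\Phi_U(t)$ for $t\notin W-W$. No structure theorem, no factor-by-factor analysis.

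By contrast, you go back to the structural decomposition and redo the work of Lemma~\ref{Fe} on each factor: explicit Fej\'er concentration on $\mathbb{R}^k$, and on the compact piece a Parseval argument with the auxiliary bump $g=|W|^{-1}\chi_W*\chi_{-W}$ (which indeed has $g(0)=\|g\|_\infty=1$ and $\widehat g\ge0$ with $\sum\widehat g=1$, so your chain of inequalities is sound). You even note, for $\mathbb{R}^k$, that one could instead invoke Lemma~\ref{Fe} plus ``convolution with a fixed bump stays near the bump''; that parenthetical remark is precisely the paper's entire proof, and it works on all of $G$ at once. So the main thing your approach costs is economy: a second appeal to the structure theorem and a bespoke argument on the compact factor, where the paper gets by with three lines that use only the conclusion of Lemma~\ref{Fe}. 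One small wrinkle in your write-up is the silent passage from the factor $H$ to its open compact subgroup $G_1$; this is harmless (shrink $V_2$ into $G_1$ and extend $g$ by zero), but you should say so explicitly.
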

\begin{proof}
This is standard for approximate identities. Indeed, given $f$ in $L^1(G)$, we can find $U$ with
$\|f-f*\Phi_U\|_{L^1(G)}<\varepsilon$. Now, take a symmetric neighborhood $W$ of zero in $G$ such that $W-W\subset V$ and find such a $U$ for $f=|W|^{-1}\chi_W$. Since now $f$ vanishes outside $W$, we have $\int_{G\setminus W} f*\Phi_U (x) dx <\varepsilon$. After plugging the integral formula for convolution in the expression on the left and changing the order of integration, this becomes
\[
\int_G\Phi_U(t)\frac{|(W+t)\setminus W|}{|W|}dt<\varepsilon.
\]
Now, the fraction under the integral sign is equal to $1$ if $t\notin W-W\subset V$. So, it suffices to restrict integration in the last formula to the complement of $V$.
\end{proof}

The last lemma allows us to prove the following statement, which will be useful in the main construction below. Let $w$ be a weight as in Theorem \ref{mainthm}, i.e., a uniformly continuous positive function on $\Gamma$ that is bounded above by 1 and bounded away from zero.

\begin{lemma}\label{convolution}
For every $\eta >0$, we have eventually $w*\Phi_U\le (1+\eta) w$ for the above approximate identity.
\end{lemma}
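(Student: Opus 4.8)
The plan is to carry out a standard approximate-identity estimate; the only point that needs a little attention is the meaning of ``eventually'', i.e.\ the behaviour along the underlying net of neighborhoods $U$ furnished by Lemma~\ref{Fe}. Fix $\eta>0$ and put $\delta=\inf_G w>0$. First I would use the uniform continuity of $w$ to pick a compact symmetric neighborhood $V$ of zero in $G$ such that $w(x-t)\le w(x)+\tfrac12\eta\delta$ for all $x\in G$ and all $t\in V$; note that $\tfrac12\eta\delta\le\tfrac12\eta\,w(x)$ since $\delta\le w$.

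With $V$ fixed, I would split the convolution. Each $\Phi_U$ is nonnegative with $\|\Phi_U\|_1=1$, so $\Phi_U(t)\,dt$ is a probability measure on $G$ and, for every $x\in G$,
\[
(w*\Phi_U)(x)=\int_{V}w(x-t)\,\Phi_U(t)\,dt+\int_{G\setminus V}w(x-t)\,\Phi_U(t)\,dt .
\]
The first integral is at most $\bigl(w(x)+\tfrac12\eta\delta\bigr)\int_{V}\Phi_U\le w(x)+\tfrac12\eta\delta$, and the second is at most $\int_{G\setminus V}\Phi_U$ because $0\le w\le 1$; hence $(w*\Phi_U)(x)\le w(x)+\tfrac12\eta\delta+\int_{G\setminus V}\Phi_U$ uniformly in $x$. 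It then remains to know that $\int_{G\setminus V}\Phi_U<\tfrac12\eta\delta$ eventually. This is Lemma~\ref{focus} in its (obvious) ``eventual'' form: choosing a symmetric neighborhood $W$ of zero with $W-W\subset V$ and putting $f=|W|^{-1}\chi_W$, the approximate-identity property gives $f*\Phi_U\to f$ in $L^1(G)$ along the net, so, exactly as in the proof of Lemma~\ref{focus}, $\int_{G\setminus V}\Phi_U\le\int_{G\setminus W}f*\Phi_U\to 0$. Combining, eventually $(w*\Phi_U)(x)\le w(x)+\eta\delta\le(1+\eta)w(x)$ for all $x$, which is the assertion.

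I expect the only (minor) obstacle to be precisely this quantifier: one must make sure that $\int_{G\setminus V}\Phi_U\to 0$ along the whole net $\{U\}$ and not merely for one $U$, which is why I would extract the tail bound from the defining convergence property of the approximate identity rather than from the bare existential statement of Lemma~\ref{focus}. Everything else is the one-line computation above.
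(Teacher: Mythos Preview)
Your proof is correct and follows essentially the same approach as the paper's: split the convolution over $V$ and $G\setminus V$, use uniform continuity on the first piece and $w\le 1$ on the second, then invoke Lemma~\ref{focus} and the lower bound $\delta$ on $w$. Your only addition is the explicit treatment of the ``eventually'' quantifier (deriving $\int_{G\setminus V}\Phi_U\to 0$ along the net from the approximate-identity property rather than citing the existential Lemma~\ref{focus} verbatim), which is a legitimate refinement but not a different method.
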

\begin{proof} Take $\varepsilon >0$ and find a compact symmetric neighborhood $V$ of zero in $G$ such that $|w(x)-w(y)|\le\varepsilon$ whenever $x-y\in V$. Then take $\Phi_U$ as in Lemma \ref{focus} for this $V$ and $\varepsilon$ and write (denoting by $d$ some positive lower bound for $w$):
\begin{multline*}
\Phi_U*w(x)\le\int\limits_V w(x-y)\Phi_U(y) dy+\int\limits_{G\setminus V}\Phi_U (y)dy \\
\le w(x)+2\varepsilon\le w(x)\left(1+\frac {2\varepsilon}{d}\right)\le w(x)(1+\eta)
\end{multline*}
if $\varepsilon$ is sufficiently small.
\end{proof}

\subsection{Covering neighborhoods}
This is a technical ingredient used in various proofs of Men$'$shov-type correction theorems.
\begin{definition}\label{cover}
A compact neighborhood  $V$ of $0$ in $G$ is said to be \textit{covering} if there exists a family $\{x_i\}_{i \in I}$ of points in $G$ such that $G = \cup_{i \in I}{(x_i + V)}$ and $|(x_i + V) \cap (x_j + V)| = 0,\,\,\, i \neq j$. 
\end{definition}

With a covering neighborhood $V$, we associate the family $\{\alpha_i\}_{i \in I}$,
\begin{equation}\label{partition}
\alpha_i(t) =\frac{\chi_V * \chi_V (t - x_i)}{|V|}, \; t \in G,
\end{equation}
of functions on $G$, where $\{x_i\}$ is the family of points mentioned in Definition \ref{cover}. Observe that $\|\alpha_i\|_\infty = 1$ and $\|\mathcal{F}\alpha_i\|_1 = 1$. The last identity will enable us to use combinations of these functions to provide Fourier expansions with uniformly bounded partial integrals (or sums).

\begin{lemma} \label{lm1}
\begin{enumerate}
\item Let $D$ be a compact subset of $G$, and $J$ a finite subset of $I$ such that $D - V \subset \cup_{i \in J}{(x_i + U)}$. Then $\sum\limits_{i \in J}{\alpha_i} = 1$ on $D$, i.e., $\{\alpha_i\}_{i\in J}$ is a partition of unity on $D$. 
\item There exists a base $\mathcal{V}$ of neighborhoods of zero in $G$ such that every $V \in \mathcal{V}$ is a covering neighborhood and $m(V + V) \le 2^{dim G}m(V)$.
\end{enumerate}
\end{lemma}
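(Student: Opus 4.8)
The plan is to dispatch the two parts separately; part (1) is purely formal once the normalization $\sum_{i\in I}\alpha_i\equiv1$ is in place, while part (2) rests on the structure theorem and the identification of $\dim G$ with a rank of the character group.

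\emph{Part (1).} First I would check that $\sum_{i\in I}\alpha_i\equiv1$ on all of $G$. Writing $\alpha_i(t)=|V|^{-1}\int_G\chi_V(u)\,\chi_V(t-x_i-u)\,du$, summing over $i$ and interchanging sum and integral (legitimate because all terms are nonnegative, and only finitely many are nonzero over any compact set), the inner sum $\sum_i\chi_V(t-x_i-u)=\sum_i\chi_{x_i+V}(t-u)$ equals $1$ for a.e.\ $u$, since the cells $x_i+V$ cover $G$ and overlap in sets of Haar measure zero; integrating $\chi_V$ over $G$ then gives $1$. Next, for $i\notin J$ the covering hypothesis on $D-V$ gives $(x_i+V)\cap(D-V)\subset\bigcup_{j\in J}\bigl((x_i+V)\cap(x_j+V)\bigr)$, a finite union of measure-zero pairwise intersections, hence itself of measure zero. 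For $t\in D$ the set $\{u\in V\colon t-x_i-u\in V\}$ is carried by the measure-preserving map $u\mapsto t-u$ into $(x_i+V)\cap(D-V)$, so it has measure zero and $\alpha_i(t)=0$. Therefore $\sum_{j\in J}\alpha_j(t)=\sum_{i\in I}\alpha_i(t)=1$ on $D$.

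\emph{Part (2).} Using the same splitting as in the proof of Lemma~\ref{Fe}, write $G=\mathbb{R}^k\times G'$ with $G'$ containing a compact open subgroup $H$. A finite product of covering neighborhoods with a doubling bound is again such a neighborhood with the exponents added, and translates of an $H$-covering neighborhood cover $G'=\bigsqcup_g(g+H)$ and then $G$; so it suffices to build the required base separately for $\mathbb{R}^k$ and for $H$. For $\mathbb{R}^k$ the cubes $[-\delta,\delta]^k$ work: they tile $\mathbb{R}^k$ and $|V+V|=2^k|V|=2^{\dim\mathbb{R}^k}|V|$. For $H$, put $d=\dim H$; the discrete group $\widehat H$ has torsion-free rank $d$ (this is the only place finite-dimensionality enters), so fix $\mathbb{Z}$-independent $\gamma_1,\dots,\gamma_d\in\widehat H$, let $\pi\colon H\to\mathbb{T}^d$, $\pi(x)=(\gamma_1(x),\dots,\gamma_d(x))$, and note its kernel $N$ has torsion dual, i.e.\ $N$ is compact and zero-dimensional. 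Choose open subgroups $N_1\supset N_2\supset\cdots$ of $N$ with $\bigcap_jN_j=\{0\}$; each $H/N_j$ has finitely generated dual (an extension of the finite group $\widehat{N/N_j}$ by $\mathbb{Z}^d$) and so is a compact abelian Lie group, hence $H/N_j\cong\mathbb{T}^d\times F_j$ with $F_j$ finite. Let $V_{j,n}$ be the preimage in $H$ of $[-\pi/n,\pi/n]^d\times\{0\}$ under $H\to H/N_j\cong\mathbb{T}^d\times F_j$, with $n\ge3$. These $V_{j,n}$ are compact neighborhoods of $0$; they tile $H$ because the cubes tile $\mathbb{T}^d$ and one translates additionally by $F_j$ and by representatives of $N/N_j$; since $V_{j,n}+V_{j,n}$ is the preimage of $([-\pi/n,\pi/n]^d+[-\pi/n,\pi/n]^d)\times\{0\}$, the Haar measures push forward to give $|V_{j,n}+V_{j,n}|/|V_{j,n}|=2^d$; and a nested-compactness argument shows that, choosing first $j$ with $N_j$ inside a prescribed neighborhood of $0$ and then $n$ large, the $V_{j,n}$ shrink to $\{0\}$, so they form a base.

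The main obstacle is the compact factor $H$ when $\dim H>0$: such a group has no arbitrarily small open subgroups, so the zero-dimensional trick of exhausting by open subgroups is unavailable. The remedy — pushing $H$ onto $\mathbb{T}^d$ along a maximal independent family of characters and exhausting the resulting genuinely profinite kernel — is exactly what forces the doubling constant to be $2^{\dim H}$; the points requiring care are the identification $\dim H=$ torsion-free rank of $\widehat H$, the compatibility of the normalized Haar measures under the quotient maps, and the verification that the constructed sets are a neighborhood \emph{base} and not merely covering neighborhoods.
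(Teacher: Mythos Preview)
Your argument is correct and follows the route the paper indicates. For part (1) the paper merely says ``this is clear'' and cites \cite{menshov,triangle}; your computation (first verifying $\sum_{i\in I}\alpha_i\equiv 1$ by Fubini, then showing $\alpha_i|_D=0$ for $i\notin J$ via the measure-zero overlap of translates) is exactly the intended verification. For part (2) the paper again only sketches: cubes in $\mathbb{R}^n$ and $\mathbb{T}^n$, then the structure theorem; you supply the missing step of passing from the compact open subgroup $H$ to tori by quotienting out a profinite kernel, which is the standard way to make that reduction precise and is what the cited references do in effect. The identification $\dim H=\operatorname{rank}_{\mathbb{Z}}\widehat H$ and the fact that $\widehat H/\langle\gamma_1,\dots,\gamma_d\rangle$ is torsion (hence $N$ profinite) are the only places needing outside input, and both are standard. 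One cosmetic remark: the ``$U$'' in the hypothesis of part (1) is a typo in the paper for ``$V$''; you have read it correctly.
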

\begin{proof}
\begin{enumerate}
\item This is clear (however, see \cite{menshov} or \cite{triangle} for details).
\item This fact is obvious for the groups $\mathbb{R}^n$ and $\mathbb{T}^n$: the role of $\mathcal{V}$ can be played by a certain family of cubes centered at zero. For an arbitrary  group $G$, the claim is deduced form these elementary cases with the help of the structure theorem. See again the above references.  
\end{enumerate}
\end{proof}

\begin{remark}\label{multiplicity}
The construction of a covering neighborhood (see the above hint) shows that the supports of the associated functions $\alpha_i$ form a covering of $G$ whose multiplicity is at most $2^{\dim G}$. Again, in the cases of $\mathbb{R}^n$ and the tori, this is straightforward.
\end{remark}

\subsection{Inductive construction}
Here we present a principal ingredient of the proofs of the main results. As it has already been said, we use the ideas of \cite{first}. However, some complications arise. Besides the fact that now we must ensure also the boundedness of Fourier sums (or partial integrals), a technical difference is that presently we shall need a certain double sequence $\{f_k^{(n)}\}_{n \in\mathbb{Z}_+,\,\, 0\le k \le n}$ of functions on $G$ (instead of a single sequence in \cite{first}). It will turn out eventually that the functions $f_n^{(n)}$ converge as $n\to\infty$, and this limit yields the desired function $\chi_b w$ after multiplication by a constant.  We shall proceed by induction on the upper index $n$ (if we view the required functions at the entries of a triangular matrix, this means that at each step we add an entire new row to this matrix).

So, let $w$ and $a$ be as in Theorem \ref{mainthm}. Fix a small $\varepsilon >0$ and a strictly monotone increasing  sequence $\{t_n\}_{n\ge 0}$, $t_n>1$, whose limit $t$ does not exceed $1+\varepsilon$. Also, fix a sequence of positive numbers $\{\rho_n\}_{n\ge 0}$ with $\sum_{n\ge 0}\sqrt{\rho_n}<\varepsilon$. Below we gather certain properties of the functions $f_k^{(n)}$ that will be ensured by induction. The construction will imply some important supplements to these properties, which we do not indicate now. 
\begin{itemize}
\item[(i)] For every $n\ge 0$, we have
\[
0\le f_k^{(n)}\le t_n w,\quad k=0,\ldots,n.
\]
\item[(ii)] The spectra of all $f_k^{(n)}$ are compact and all these functions belong to $L^1(G)\cap C_0(G)$. By $C_0(G)$ we mean the set of all continuous functions on $G$ tending to $0$ at infinity; $C_0(G)=C(G)$ if $G$ is compact. Consequently, all $f_k^{(n)}$ are square integrable.
\item[(iii)] There exists a compact subset $K$ of $G$ such that all functions $f_k^{(n)}$ have compact spectra included in $K\cup R\cup S$, where $(R,S)$ is the sufficient pair mentioned in Theorem \ref{mainthm}.
\item[(iv)] We have 
\begin{equation}\label{zero}
\|f_0^{(0)}-\chi_a w\|_1<\rho_0.
\end{equation}
Next,
\begin{equation}\label{next}
\|f_k^{(n)}-f_k^{(n-1)}\|_1<\rho_{n}
\end{equation}
for $n\ge 1$ and $k=0,\ldots,n-1$ (this relates all functions in the $(n-1)$st row of the matrix mentioned above with \textit{the first $n$ functions} in the $n$th row).
%§Þ§à§Ø§ß§à §ã §á§à-§Õ§â§å§Ô§à§Þ§å §ã§Õ§Ó§Ú§ß§å§ä§í§Þ§Ú §Ú§ß§Õ§Ö§Ü§ã§Ñ§Þ§Ú:
%\|f_k^{(n + 1)} - f_k^{(n)}\|_1 < \rho_{(n + 1)}
%for $n \ge 0$ and $k = 1, \dots, n$ (this relates all functions in the $n$th row of the matrix mentioned above with \textit{the first $n + 1$ functions} in the $n + 1$st row).
\end{itemize}

Now, we start the construction with $n=0$. To ensure \eqref{zero}, we put $f_0^{(0)}=(\chi_a w)*\Phi_{U_0}$, where $U_0\subset\Gamma$ is chosen in such a way that $\| (\chi_a w)*\Phi_{U_0} - \chi_a w\|_1\le\rho_0$ (see Lemma \ref{Fe}) and $w*\Phi_{U_0}\le t_0 w$ (see Lemma \ref{convolution}). Since $\chi_a w\le w$, the inequalities in (i) for $n=0$ follow. Next, clearly, $f_0^{(0)}\in L^1(G)\cap C_0 (G)$. Moreover, $\mathcal{F}(f_0^{(0)})$ is supported on the compact set $K=U_0-U_0$; this will be the ``$K$'' mentioned in Theorem \ref{mainthm} and in (iii) above. Next, since $\mathcal{F}(f_0^{(0)})$ is integrable, the norm $\|f_0^{(0)}\|_u$ is finite, though no reasonable control of it is available.

Next, suppose that for some $n> 0$ the functions $f_0^{(n - 1)},\ldots,f_{n-1}^{(n-1)}$ have already been constructed. We are going to construct the required collection with the upper index $n$. 
If $k\le n-1$, we take $f_k^{(n)}=\Phi_{U_n}*f_k^{(n-1)}$, where $U_n$ is chosen so as to ensure \eqref{next} and also the inequality $\Phi_{U_n}*w\le\frac{t_n}{t_{n-1}}w$ (see Lemmas \ref{Fe} and \ref{convolution}). In the sequel, we will impose more restrictions on $U_{n}$, compatible with the above. Surely, the present choice of $U_n$ ensures the estimates in (i) for all $k$ except for $k=n$.

The construction of $f_{n}^{(n)}$ is more tricky. First, we introduce the auxiliary function 
\begin{equation}\label{aux}
g_n = f_{n-1}^{(n-1)}\left(1 - \frac{f_{n-1}^{(n-1)}}{t_{n-1}w}\right). 
\end{equation}
Observe that the spectrum of $g_n$ is a compact subset of $\Gamma$ and $g_n$ is nonnegative by (i).

The subsequent arguments will involve certain objects (sets, functions, coefficients, parameters) depending in fact on $n$. But since now $n$ is fixed, this dependence will not always be reflected in the notation. We shall approximate $g_n$  from below by a function suitable for further constructions. 

For this, observe that $g_n$ is square-integrable, so the quantity $\int_G \min (g_n(x),\delta)^2dx$ tends to zero as $\delta\to +0$. Hence, we can find a (small) $\delta>0$ such that for the compact set $C=\{x\in G\colon g_n(x)\ge\delta\}$ we have $\|(g_n-\delta)\chi_C\|_2 > (9/10)\|g_n\|_2$. Denote $g= (g_n-\delta)\chi_C$, then $g$ is continuous and compactly supported, hence uniformly continuous. Next, clearly, $g(x)+\delta/2< g_n(x)$ in a neighborhood $W$ of $C$ with compact closure. By using Lemma \ref{lm1} (with $C$ in the role of ``$D$''), it is easy to realize that there is a (small) covering neighborhood $V$ in $G$ (among other things, we need that $C-V\subset W$) such that $g$ is approximated uniformly and in $L^2(G)$ within any precision prescribed beforehand by a function of the form
\[
h_n=\sum\limits_{i\in J} c_i\alpha_i,\quad c_i=g(x_i)\ge 0,
\]
(the $\alpha_j$ are given by \eqref{partition}; we also use the notation from Lemma \ref{lm1}). Clearly, $h_n\le g_n$ if $V$ is sufficiently small, and all this can be arranged so as to ensure the inequality
\begin{equation}\label{scoop}
\|h_n\|_2^2\ge\frac 1 2 \|g_n\|_2^2.
\end{equation}
Next, by Remark \ref{multiplicity}, we have
\[
h_n(x)^2\le 2^{\dim G}\sum\limits_{i\in J}(c_i\alpha_i (x))^2,\quad x\in G.
\]
Integrating, we arrive at
\begin{equation}\label{below}
\|g_n\|_2^2\le 2^{\dim G+1}\sum\limits_{i\in J}(c_i)^2\|\alpha_i\|_2^2.
\end{equation}

Recall that the set $J$ in the last sum is finite. For a detail in what follows, it is convenient to assume that $J$ is a segment of positive integers.  Now, we want to replace the functions $\alpha_i$, $i\in J$, by the functions $\beta_i=\Phi_{U_{n}}*\alpha_i$, $i\in J$, with compact spectrum. In addition to the restrictions on $U_{n}$ imposed above, we demand that
\begin{equation}\label{step1}
\|\beta_i\|_2^2=\|\Phi_{U_{n}}*\alpha_i\|_2^2\ge \frac{1}{2}\|\alpha_i\|_2^2.
\end{equation}
(See Lemma \ref{Fe}.) 

Finally, we can define the function $f_{n}^{(n)}$: put
\begin{equation}\label{endpoint}
f_{n}^{(n)}=f_{n-1}^{(n)}+\widetilde{h_n},\,\text{ where }\, \widetilde{h_n}=\Re\sum\limits_{i\in J}{c_i\beta_i\gamma_i}.
\end{equation}
Here the $\gamma_i$ are certain characters of the group $G$ (of course, they depend also on $n$, but we do not reflect this in the notation for short). These characters are introduced to eliminate the interference between the summands in \eqref{endpoint} (and elsewhere), which will provide the desired estimate for $\|\cdot\|_u$. The choice of the $\gamma_i$ is described in the following lemma, whose proof is much similar to the proof of Lemma 1 in \cite{second}, and is based entirely on the definition of a sufficient pair and on the fact that the summation basis and the sufficient pair in question are coordinated. We do not reproduce the arguments here. Note that the lemma is quite transparent for the groups $\mathbb{R}^n$ and $\mathbb{T}^n$ in the role of $G$. Surely, in these cases $\Gamma$ has no elements of order two, so item (2) below can be shortened accordingly.  In any case, the $\gamma_i$ are chosen one after another as $i\in J$ grows (we remind the reader that we have assumed that $J$ is a segment of integers). Recall also that the sufficient pair in question is denoted by $(R,S)$.

\begin{lemma}\label{mainlm}
The characters $\gamma_i$ can be chosen in such a way that 
\begin{enumerate}
\item the support of the Fourier transform of \, $\Re(\beta_i\gamma_i)=\beta_i(\gamma_i+\overline{\gamma_i})/2$ lies in $S \cup R$ and intersects neither the spectra of all functions $f_k^{(j)}$ constructed previously {\rm(}i.e., with $j\le n$ and $k\le n-1${\rm)}, nor the spectra of all functions $\Re\beta_s\gamma_s$ for $1 \le s < i$, nor the ``union\footnote{Not quite: the lowest upper bound in the lattice of measurable subsets of $\Gamma$.}'' of all sets in the summation basis $\mathcal{B}$ that split any of these spectra{\rm;}
\item either  $2\gamma_i = 0$ {\rm(}i.e., $\gamma_i(\cdot)^2 = 1${\rm)} or the $\pm 2\gamma_i$ do not lie in the spectrum of $\beta_i^2$.
\end{enumerate}
\end{lemma}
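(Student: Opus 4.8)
The plan is to choose the characters $\gamma_i$ one at a time, exploiting the fact that at each stage the "forbidden" part of $\Gamma$—the spectra already in play, together with the $\mathcal{B}$-saturations of those spectra—is a bounded set, so its removal from $R$ and from $S$ keeps the pair sufficient. Concretely, suppose $\gamma_1,\dots,\gamma_{i-1}$ have been picked. Let $\Sigma$ be the union of all the spectra listed in item~(1): the spectra of the $f_k^{(j)}$ with $j\le n$, $k\le n-1$, and the spectra of $\Re(\beta_s\gamma_s)$ for $s<i$. All of these are compact (for the $f_k^{(j)}$ this is property~(ii)/(iii); for $\Re(\beta_s\gamma_s)$ it is because $\beta_s=\Phi_{U_n}*\alpha_s$ has compact spectrum and translation by $\pm\gamma_s$ preserves compactness), so $\Sigma$ has compact closure. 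Set $E=\Sigma_{\mathcal{B}}$, the "union" of all members of $\mathcal{B}$ splitting $\Sigma$; since $\mathcal{B}$ is coordinated with $(R,S)$, the pair $(R\setminus(E\cup\Sigma),\,S\setminus(E\cup\Sigma))$ is again sufficient (absorbing the compact set $\Sigma$ into $E$ changes nothing, as one may enlarge $E$ by a compact set while staying bounded and sufficient). Let $T$ denote the compact support of $\mathcal{F}\beta_i$, which is symmetric if $U_n$ is chosen symmetrically (so $\beta_i=\overline{\beta_i}$ up to the character issue—more precisely $\alpha_i$ is real and even–ish; in any case $\supp\mathcal{F}\beta_i$ is compact). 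By the definition of a sufficient pair applied to the compact set $T\cup(-T)$, there is $\gamma_i\in\Gamma$ with $-\gamma_i+(T\cup(-T))\subset R\setminus(E\cup\Sigma)$ and $\gamma_i+(T\cup(-T))\subset S\setminus(E\cup\Sigma)$. Then $\supp\mathcal{F}(\beta_i\gamma_i)=\gamma_i+\supp\mathcal{F}\beta_i\subset S$ and $\supp\mathcal{F}(\beta_i\overline{\gamma_i})=-\gamma_i+\supp\mathcal{F}\beta_i\subset R$, so the spectrum of $\Re(\beta_i\gamma_i)=\tfrac12(\beta_i\gamma_i+\beta_i\overline{\gamma_i})$ lies in $R\cup S$ and is disjoint from $\Sigma$ and from $E$, which is exactly item~(1).

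For item~(2), note that $\mathcal{F}(\beta_i^2)=\mathcal{F}\beta_i*\mathcal{F}\beta_i$ is supported on the compact set $T+T$. If $\Gamma$ has no elements of order $2$, then among the characters $\gamma$ produced by the sufficiency argument there is at least one (indeed, all but those in a bounded set) whose double $2\gamma$ avoids the compact set $\pm\supp\mathcal{F}(\beta_i^2)$: one runs the sufficiency argument for a slightly larger compact set to gain room, or observes directly that $\{\gamma:2\gamma\in\pm(T+T)\}$ is bounded while the set of admissible $\gamma_i$ from the previous paragraph is unbounded, so the two are not forced to meet. If $\Gamma$ does have $2$-torsion, one first checks whether the construction can be completed with $2\gamma_i=0$; if not, the same boundedness-versus-unboundedness dichotomy lets one pick $\gamma_i$ with $\pm2\gamma_i\notin\spec(\beta_i^2)$. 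Either way the freedom left after satisfying~(1) is more than enough to also satisfy~(2).

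The main obstacle is purely bookkeeping: one must be sure that the set $\Sigma$ (and hence $E=\Sigma_{\mathcal{B}}$) is genuinely bounded at the moment $\gamma_i$ is chosen, i.e., that the recursion has not secretly produced infinitely many characters or spectra of unbounded total size before stage $i$. Here this is automatic because $J$ is a finite segment of integers and the outer induction on $n$ has, at row $n$, only finitely many previously constructed functions $f_k^{(j)}$; all their spectra sit in the fixed compact set $K\cup R\cup S$ intersected with appropriate bounded pieces, but what we actually need is only that each individual spectrum is compact and that there are finitely many of them, both of which hold. The only genuinely delicate point—and the reason the authors refer to Lemma~1 of \cite{second}—is verifying that passing from $\Sigma$ to $\Sigma\cup\Sigma_{\mathcal{B}}$ does not destroy boundedness; this is guaranteed precisely by the coordination hypothesis together with the fact that $\Sigma_{\mathcal{B}}$, being the supremum of basis sets that split a bounded set, can itself be taken to have compact closure (as in Examples~1.2 and the solid-sets case). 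Once that is in hand, the sequential choice goes through without further difficulty.
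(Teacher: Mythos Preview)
The paper does not actually prove this lemma: it says the argument ``is much similar to the proof of Lemma~1 in \cite{second}'' and ``is based entirely on the definition of a sufficient pair and on the fact that the summation basis and the sufficient pair in question are coordinated,'' and then declines to reproduce it. Your overall plan---choose the $\gamma_i$ one after another, at each step removing from $R$ and $S$ the previously produced spectra together with their $\mathcal{B}$-saturations, and then invoke sufficiency---is exactly the scheme the paper alludes to.

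There is, however, a concrete misstatement that would derail the argument as written. You assert that $\Sigma_{\mathcal{B}}$ ``can itself be taken to have compact closure (as in Examples~1.2 and the solid-sets case).'' That is false: in Example~1.2 the set $K_{\mathcal{B}}$ lies in a union of coordinate \emph{strips}, which is unbounded. Coordination does \emph{not} say that $E_{\mathcal{B}}$ is bounded; it says directly that $(R\setminus E_{\mathcal{B}},\,S\setminus E_{\mathcal{B}})$ is again sufficient. So you must use coordination as a black box, not try to deduce it from a boundedness claim. To absorb the compact set $\Sigma$ itself (which need not lie inside $\Sigma_{\mathcal{B}}$), one standard device is to enlarge $\Sigma$ slightly: pick $B_0\in\mathcal{B}$ with $\Sigma\subset B_0$, adjoin to $\Sigma$ a small set of positive measure outside $B_0$ to form a bounded $\Sigma'$; then $B_0$ splits $\Sigma'$, whence $\Sigma\subset B_0\subset\Sigma'_{\mathcal{B}}$, and coordination applied to $\Sigma'$ handles both $\Sigma$ and $\Sigma_{\mathcal{B}}$ at once.

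Your treatment of item~(2) is also too loose. The assertion that $\{\gamma:2\gamma\in\pm(T+T)\}$ is bounded presumes that the doubling map on $\Gamma$ is proper, which you have not justified (and which can fail for general LCA groups). The device actually used---and the one mirrored in Example~1.1 of the present paper---is to apply sufficiency to the enlarged compact set $E\cup(\mu+E)\cup(-\mu+E)$ for a well-chosen $\mu$, so that at least one of the two resulting candidates $\lambda$ or $\lambda+\mu$ satisfies the condition on $\pm2\gamma_i$; this bypasses any properness assumption. Your phrase ``one runs the sufficiency argument for a slightly larger compact set to gain room'' gestures toward this, but the mechanism should be made explicit.
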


Now, we verify the inequality in (i) for the function  $f_{n}^{(n)}$. By the definition of the $\beta_i$, we have
\[
\left|\Re\sum\limits_{i\in J}{c_i\beta_i\gamma_i}\right|\le\sum\limits_{i\in J}c_i\alpha_i*\Phi_{U_{n}}=h_n*\Phi_{U_{n}}\le g_n*\Phi_{U_{n}}.
\]
So, by \eqref{endpoint} and the definition of $f_{n-1}^{(n)}$, we have
\[
\Phi_{U_{n}}*(f_{n-1}^{(n-1)}-g_n)\le f_{n}^{(n)}\le\Phi_{U_{n}}*(f_{n-1}^{(n-1)}+g_n).
\]
Finally, we use the inductive hypothesis in (i) and the definition \eqref{aux} to conclude that
$f_{n-1}^{(n-1)}-g_n\ge f_{n-1}^{(n-1)}-f_{n-1}^{(n-1)}\ge 0$ and
\[
f_{n-1}^{(n-1)}+g_n\le f_{n-1}^{(n-1)}+ t_{n-1} w\left(1 - \frac{f_{n-1}^{(n-1)}}{t_{n-1}w}\right)=t_{n-1} w.
\]
The desired result follows because $\Phi_{U_{n}}*(t_{n-1} w)\le t_{n} w$ by the choice of $U_{n}$.

This finishes the induction.

\begin{remark}\label{monotone}
It can easily be arranged that $U_0\subset U_1\subset U_2\subset\ldots$. Then the spectrum of $f_k^{(n)}$ does not change when $n$ varies with $k$ fixed (i.e., within each column of the matrix). In the sequel, we will assume that the $U_n$ have this property.
\end{remark}

\subsection{Proof of all claims except the uniform boundedness of partial Fourier integrals}
In this subsection we show that the sequence $\{t_n^{-1}f_n^{(n)}\}_{n \in \mathbb{Z}_+}$ converges  to a function of the form $\chi_b w$, and we verify all metric and spectral conditions for the limit function, except the finiteness of the norm $\|\cdot\|_u$ for it.
\subsubsection{Convergence}

First, we observe that for every $k$ the limit $F_k = \lim_{j \ge k, j \to \infty}{f_k^{(j)}}$ exists in $L^2(G)$. (These are ``the limits along all columns''.) Indeed, by \eqref{next} and (i), we have $\|f_k^{(j)}-f_k^{(j - 1)}\|_2<c\sqrt{\rho_{j}}$, and the quantities on the right were chosen to constitute a convergent series. %Clearly, the same inequality \eqref{next} implies that we have also convergence in $L^1(G)$ along the columns --- \textit{a fortiori}, to the same functions because convergence in the mean implies convergence a.e. for a subsequence. It also follows that the $F_k$ are uniformly bounded and nonnegative a.e.

Next, the functions  $\{F_k\}$ form partial sums of an orthogonal series. Indeed, it can easily be seen by induction that, for each $n$, the spectra of the functions $f_0^{(n)}, f_1^{(n)}-f_0^{(n)},\ldots, f_n^{(n)}-f_{n-1}^{(n)}$ are mutually disjoint (see Lemma \ref{mainlm}), hence, these functions are mutually orthogonal, and the claim follows by the limit passage as $n\to\infty$.

It is also easily seen by induction that
\begin{equation}\label{integrals}
\int\limits_G f_k^{(n)}(x)dx = \int\limits_a w(x)dx
\end{equation}
for all $n\ge 0$ and all $k=0,\ldots,n$. Indeed, this is clear for $k=n=0$ and then for $k=0,\,n=1$, because these two functions are obtained from $\chi_a w$ by convolution with positive functions of unit $L^1$-norm. Hence, the spectrum of $f_0^{(1)}$ includes a (neighborhood of) zero. So, $f_1^{(1)}$ is obtained from $f_0^{(1)}$ by adding a function with zero integral (see again Lemma \ref{mainlm}). This proves \eqref{integrals} for $n=1$. Then we pass to $n=2$ in the same way, etc.

Now, we see that
\[
\int\limits_G (F_k)^2 dx=\lim_{n\to\infty}\int\limits_G (f_k^{(n)})^2 dx\le c\int\limits_a w dx,
\]
hence the functions $F_k$ converge to some function $F$ in $L^2(G)$ as $k\to\infty$. Since
\[
\|F_k - f_k^{(k)}\|_2 \le c \sum\limits_{i > k}\sqrt{\rho_i}\,\text{ and }\,\|F_{k-1} - f_{k-1}^{(k)}\|_2 \le c \sum\limits_{i > k}\sqrt{\rho_i},
\]
we see that the sequences $\{f_k^{(k)}\}$ and $\{f_{k-1}^{(k)}\}$ also tend to $F$ in $L^2(G)$ as $k\to\infty$. Hence, $\|\widetilde{h_k}\|_2=\|f_k^{(k)}-f_{k-1}^{(k)}\|_2\to 0$ as $k\to\infty$. 

But the terms $\Re( c_i\beta_i\gamma_i)$ in the formula for $\widetilde{h_k}$ (see \eqref{endpoint}) are mutually orthogonal by construction (see Lemma \ref{mainlm}), hence
\[
\|\widetilde{h_k}\|_2^2= \sum\limits_{i\in J}\|\Re (c_i\beta_i\gamma_i)\|_2^2.
\]
Next, we observe that
\[
\|\Re(\beta_i\gamma_i)\|_2^2 = \frac{1}{4}\int\limits_G{(\overline{\gamma_i} + \gamma_i)^2 \beta_i^2} = \left\{\begin{matrix}
\int\limits_G{\beta_i^2}, \, 2\gamma_i = 0,\\
\frac 1 2\int\limits_G{\beta_i^2}, \, 2\gamma_i \neq 0
\end{matrix}\right.
\]
(in the second line, we have used the fact that the characters $\pm 2\gamma_i$ are not in the spectrum of $\beta_i^2$ if $\gamma_i$ is not of order $2$, see Lemma \ref{mainlm}). Combining \eqref{step1}, \eqref{below}, and \eqref{scoop}, we see that $g_k\to 0$ in $L^2(G)$. Since some subsequence of $\{f_k^{(k)}\}$ must converge to $F$ a.e., looking at formula \eqref{aux} for $g_k$ we realize that at every $x\in G$, either $F(x)=0$, or $F(x)=tw(x)$ (recall that $t_n\to t$). Hence, $F=t\chi_b w$ for some measurable set $b$. We shall show that this $b$ is the required set.

Observe, by the way, that
\begin{equation}\label{id}
\int\limits_b tw(x)dx=\int\limits_a w(x)dx.
\end{equation}
Indeed, inequality \eqref{next} implies that $f_k^{(n)}\to F_k$ also in $L^1(G)$  as $n\to\infty$, hence 
$\int_G F_k(x) dx=\int_a w(x) dx$ for all $k$ by \eqref{integrals}. Since also all $F_k$ are nonnegative, $F$ is integrable and $\int_G F(x) dx\le\int_a w(x) dx$ by the Fatou lemma. In fact, equality occurs here. Indeed, the construction and the Plancherel theorem show that $\mathcal{F}(F)$
and  $\mathcal{F}(F_0)$ coincide a.e. in some neighborhood of zero in $\Gamma$. Since both functions are continuous, they coincide at $0$, whence the claim.

Since $t>1$, we see that $\int_b w\le\int_a w$.

\subsubsection{Correction}
Here we prove that $a$ and $b$ differ only slightly, as claimed in Theorem \ref{mainthm}. By the above discussion, the functions  $F_0$ and $F-F_0$ are orthogonal, hence $\int_G F_0F dx=\int_G (F_0)^2 dx$. Since $f_0^{(0)}$ is at the distance of at most ($\const\varepsilon$) from $F_0$ in $L^2(G)$, we see that
\[
\int\limits_G f_0^{(0)}F =\int\limits_G (f_0^{(0)})^2 +O(\varepsilon).
\]
Now,
\[
\int\limits_{a\cap b}w^2=\frac 1 t \int\limits_G (\chi_aw)F=
\frac 1 t\left(\int\limits_G(\chi_a w-f_0^{(0)})F+\int\limits_G (f_0^{(0)})^2 +O(\varepsilon)\right).
\]
Clearly, the first integral in parentheses is $O(\varepsilon)$ by \eqref{zero}. For the second integral, we write
\[
\int\limits_G (f_0^{(0)})^2\ge\int\limits_G(\chi_a w)^2-\int\limits_G| (f_0^{(0)})^2 -(\chi_a w)^2|\ge \int\limits_a w^2-A_0\int\limits_G| f_0^{(0)} -\chi_a w|.
\]
The subtrahend in the last expression is again $O(\varepsilon)$. Collecting the estimates, we arrive at
$\int\limits_{a\cap b}w^2\ge \frac 1t \int\limits_a w^2 -A_1\varepsilon\ge \int\limits_a w^2-A_2\varepsilon$ if $t$ has been chosen sufficiently close to $1$. Since $\int_b w\le\int_a w$, we arrive at $\int\limits_{a\Delta b}w^2\le A_3\varepsilon$, as required.

\subsubsection{About Remark {\rm\ref{unity}}}
We have $1*\Phi_U=1$ for every $U$. Hence, the numbers $t_n$ are not required in the case where $w$ is identically equal to $1$: the above arguments work with $t_n=1$ for all $n$ (accordingly, $t=1$). Now the claim of the remark follows from \eqref{id}.

\subsubsection{Spectrum}
Needless to say that condition (2) in Theorem \ref{mainthm} is clear from the construction.

\subsection{Uniform boundedness of partial Fourier integrals}
We remind the reader that by Remark \ref{monotone}, the spectrum of $f_k^{(n)}$ does not change when $n\ge k$ varies with $k$ fixed.

Now, take a set $B$ in the summation basis $\mathcal{B}$ in question and find the minimal $k$ such that  $B$ does not include the spectrum of $f_k^{(k)}$ (here and in the next several lines, all inclusions are understood up to a set of zero measure in $\Gamma$). We shall provide some uniform bound for $|P_B f_k^{(k)}|$, and this will suffice. Indeed, suppose we have ensured an upper bound  $D$ for this function. Since for every $j>k$ the function $f_k^{(j)}$ is obtained form $f_k^{(k)}$ by convolution with a nonnegative function with unit integral and since convolution commutes with $P_B$, we see that $|P_B f_k^{(j)}|\le D$. However, by construction (see Lemma \ref{mainlm}), we have
$P_B f_j^{(j)}=P_B f_k^{(k)}$, and we see that  $|P_B f_j^{(j)}|\le D$ for all $j\ge k$, hence also  $|P_B F|\le D$ in the limit.

The nature of the required uniform estimate depends heavily on whether $k=0$ or $k>0$. If $k=0$ (i.e., $B$ splits the support of $f_0^{(0)}$), then $|P_B f_0^{(0)}|\le \|\mathcal{F}f_0^{(0)}\|_1$. The last quantity does not depend on $B$, as required,  but otherwise it is out of our control, we know only that it is finite because $\mathcal{F}f_0^{(0)}$ is bounded and compactly supported.  But if $k>0$ (which is true for sure if $B\supset K$, as in Remark \ref{estimate}), we see that $B$ includes the support of $f_{k-1}^{(k)}$ by the minimality of $k$. Hence, $P_B f_k^{(k)}=f_{k-1}^{(k)}+P_B \widetilde{h_k}$, see \eqref{endpoint}. Since (it can be arranged that) all functions $f_j^{(n)}$ are uniformly bounded, say, by 2, it suffices to estimate the second summand on the right in the last formula.

Recall that in the expression given for $\widetilde{h_k}$ in \eqref{endpoint} we assumed that $J$ is a segment of integers. The way in which we used the order on $J$ in the construction (see again Lemma \ref{mainlm}) shows that there is a unique $l\in J$ with
\[
P_B\widetilde{h_k}=\sum\limits_{i<l}\Re (c_i\beta_i\gamma_i)+P_B(\Re c_l\beta_l\gamma_l).
\]
Now, we remind the reader that $\beta_i=\Phi_{U_{n}}*\alpha_i$, $i\in J$. Hence, recalling the properties of the functions $\alpha_i$ (see \eqref{partition} and Lemma \ref{lm1}) and the fact that $|c_i|=|g(x_i)|\le 2$, we obtain
\[
\left|\sum\limits_{i<l}\Re c_i\beta_i\gamma_i\right|\le 2\sum\limits_{i<l}\Phi_{U_{n}}*\alpha_i=
 2\Phi_{U_{n}}*\left(\sum\limits_{i<l}\alpha_i\right)\le 2
 \]
 and
\[
|P_B(\Re c_l\beta_l\gamma_l)|\le 2\|\mathcal{F}(\frac{\gamma_l+\bar{\gamma_l}}2 \beta_l)\|_1\le\|\mathcal{F}\alpha_l\|_1\le 1.
\]
Collecting the estimates, we see that we have proved Theorem \ref{mainthm} together with Remark \ref{estimate}.

\subsection{Sharp inequalities}
Here we prove Theorem \ref{line}. We saw in the preceding subsection that only $f_0^{(0)}$ presents an obstruction to what we are going to do, and in order to prove the desired claim we must ensure an appropriate control of the partial Fourier integrals (or sums) for this function. In some specific but important cases, this can be done indeed, with the help of a theorem proved in \cite{carleson} and formulated below.

Let  $X$ be a Banach space whose elements are locally integrable functions on a measure space $(S,\mu)$. In a natural way, the space $L^1_{loc}(\mu)$ is locally convex. Next, we denote by $L^{\infty}_0(\mu)$ the space of all essentially bounded functions supported on a set of finite measure. If $g\in L^{\infty}_0(\mu)$, the formula $\Psi_g(u)=\int\limits_S gu d\mu$ defines a linear functional on $L^1_{loc}(\mu)$, hence on every linear subspace of this space. Suppose that the following two conditions are satisfied. 
\begin{enumerate}
\item[A1.] The natural embedding $X \hookrightarrow L^1_{loc}(\mu)$ is continuous and the unit ball of $X$ is weakly compact in $L^1_{loc}(\mu)$.
\item[A2.] For every $g \in L^\infty_0(\mu)$ we have the weak type estimate
$$m(\{|g| > t\}) \le c\frac{ \|\Phi_g\|_{X^*}}t,$$
where $c$ depends only on $X$.
\end{enumerate}

\begin{theorem}\label{correction}
For every $f\in L^\infty(\mu)\cap L^1(\mu)$ with $\|f\|_\infty \le 1$ and every $\varepsilon >0$, there exists a measurable function $\varphi$ with $ 0 \le \varphi \le 1$ such that $\varphi f\in X$, $\mu(\{\varphi \neq 1\}) \le \varepsilon$, and $\|\varphi f\|_X \le \const \log(2+\varepsilon^{-1}\|f\|_1)$.
\end{theorem}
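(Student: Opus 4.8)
I would derive Theorem~\ref{correction} from a single ``weak correction'' lemma obtained by Hahn--Banach duality, and then iterate it, combining the iteration with a layer-cake decomposition to produce the sharp logarithmic dependence on $\|f\|_1$.

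\smallskip
\noindent\emph{Step 1 (a one-step lemma via minimax).} Fix $N>0$ and put $\mathcal{K}=\{g\in X:\|g\|_X\le N\}$; by A1 this set is convex and weakly compact in $L^1_{\loc}(\mu)$, and for each $\psi\in L^\infty_0(\mu)$ the map $g\mapsto\int_S(f-g)\psi\,d\mu$ is affine and continuous for the weak topology of $L^1_{\loc}$. A minimax theorem therefore applies on $\mathcal{K}\times\{\psi:\|\psi\|_\infty\le1\}$ and gives
\[
\inf_{g\in\mathcal{K}}\|f-g\|_1\;=\;\sup_{\|\psi\|_\infty\le1}\Bigl(\int_S f\psi\,d\mu-N\|\Phi_\psi\|_{X^*}\Bigr).
\]
Splitting $\int_S f\psi\,d\mu$ over $\{|\psi|>t\}$ and $\{|\psi|\le t\}$, bounding the low part by $t\|f\|_1$ (using $\|f\|_\infty\le1$) and the high part by $c\,t^{-1}\|\Phi_\psi\|_{X^*}$ via A2, and then choosing $t=c/N$, the right-hand side is $\le c\|f\|_1/N$. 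Hence there is $g\in X$ with $\|g\|_X\le N$ and $\|f-g\|_1\le 2c\|f\|_1/N$. Running the same argument with the (still convex) test class $\{\psi:\|\psi\|_\infty\le1,\ \|\psi\|_1\le\sigma\}$ and testing the resulting $g$ against $\sgn(f-g)\chi_D$ over sets $D$ of measure $\le\sigma$, one gets in addition $\mu(\{|f-g|>1/2\})\le c'\mu(\supp f)/N$. Thus, at bounded cost in the $X$-norm, one can simultaneously reduce the $L^1$-defect by a definite factor and push the ``large'' part of $f-g$ onto a set of small measure.

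\smallskip
\noindent\emph{Step 2 (iteration and layer cake).} The delicate point is to iterate Step~1 while (a) keeping the successive remainders bounded by $\|f\|_\infty$ --- which forces the corrections to act essentially ``in place'', of the form $\theta f$ with $0\le\theta\le1$, so that multiplying the remaining weight back in causes no $L^\infty$-growth --- and (b) running the iteration for $k\asymp\log(2+\mu(\supp f)/\delta)$ steps, ending with $\varphi\in[0,1]$, $\varphi f\in X$, $\|\varphi f\|_X\le Ck$, and $(1-\varphi)f$ small in $L^1$ (a small residual piece, living on a set of controlled measure, being corrected by a nested application of the construction whose $X$-cost is geometrically smaller). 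For a function with $|f|\ge\tfrac12$ on its support --- in particular an indicator function --- $L^1$-smallness of $(1-\varphi)f$ is the same as $\varphi=1$ off a set of measure $\le\delta$. For a general $f$ with $\|f\|_\infty\le1$ I would then use the layer-cake decomposition $f=\sum_{n\ge0}2^{-n}g_n$, $g_n=2^nf\chi_{\{2^{-n-1}<|f|\le2^{-n}\}}$, so that $\|g_n\|_\infty\le1$, $|g_n|>\tfrac12$ on $\supp g_n$, and $\mu(\supp g_n)\le 2^{n+1}\|f\|_1$. Correcting each $g_n$ with exceptional budget $\varepsilon_n=2^{-n-1}\varepsilon$ and gluing the weights $\varphi^{(n)}$ across the disjoint level sets yields $\varphi\in[0,1]$ with $\mu(\{\varphi\ne1\})\le\sum_n\varepsilon_n\le\varepsilon$, $\varphi f=\sum_n2^{-n}\varphi^{(n)}g_n\in X$, and
\[
\|\varphi f\|_X\le\sum_{n\ge0}2^{-n}\,C\log\!\bigl(2+2^{n+1}\|f\|_1/\varepsilon_n\bigr)\lesssim\sum_{n\ge0}2^{-n}\bigl(n+\log(2+\varepsilon^{-1}\|f\|_1)\bigr)\lesssim\log(2+\varepsilon^{-1}\|f\|_1),
\]
which is the asserted estimate. (A complex-valued $f$ is handled by treating $\Re f$ and $\Im f$, or directly, since only $|f|$ enters the decomposition.)

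\smallskip
\noindent\emph{The main obstacle.} Step~1 is soft once the minimax hypotheses are checked --- that is exactly what A1 and A2 are for. The real work is Step~2: duality produces an approximant close to $f$ only in $L^1$ (a weight close to $1$ in \emph{average}), whereas the theorem demands $\varphi=1$ off a set of small \emph{measure} and the corrected function literally of the form $\varphi f$, while restriction to a set and truncation --- the naive bridges between these two --- are not bounded operations on an abstract Banach function space. Carrying this out without spoiling the logarithmic bound is what forces the ``in-place'' form of the one-step correction, the passage to level sets on which $f$ stays bounded below, and the careful balancing of the geometrically small nested corrections against the logarithmically growing $X$-norm; this is the part I expect to be hard and the part where \cite{carleson} does the essential work.
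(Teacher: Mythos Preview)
The paper does not prove Theorem~\ref{correction}: it is quoted from \cite{carleson} and used as a black box in the proof of Theorem~\ref{line}. So there is no ``paper's own proof'' to compare your attempt against.

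On its own merits, your outline identifies the right framework and the right obstacle but does not overcome the latter. Step~1 is essentially correct: A1 supplies the weak compactness needed for minimax, and A2 is exactly the estimate that bounds the dual side, so one does obtain $g\in X$ with $\|g\|_X\le N$ and $\|f-g\|_1\lesssim\|f\|_1/N$. The layer-cake reduction at the end is also a legitimate way to pass from the case $|f|\ge\tfrac12$ on $\supp f$ to the general case, and your summation of the bounds is fine.

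The genuine gap is the one you flag yourself. Duality hands you an approximant $g$, not a multiplier $\theta$ with $\theta f\in X$; and nothing in your Step~1 forces $g$ to be ``in place''. Your second minimax, over $\{\|\psi\|_\infty\le1,\ \|\psi\|_1\le\sigma\}$, controls at best $\sup_{\mu(D)\le\sigma}\int_D|f-g|$; this does not yield the claimed bound on $\mu(\{|f-g|>\tfrac12\})$ (a large set on which $|f-g|\approx 1$ is not excluded: each test set $D$ of measure $\le\sigma$ sees integral $\le\sigma$, which is no contradiction), and even if it did, you would still have $g$ rather than $\varphi f$. Your parenthetical ``which forces the corrections to act essentially in place'' is precisely the step that needs an argument, and you supply none --- you conclude by saying that this is where \cite{carleson} ``does the essential work''. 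That is honest, but it means your proposal is a roadmap that correctly locates the difficulty without resolving it; it is not yet a proof.
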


Now, let $\mathcal{B}$ be the summation basis for the unit circle or the real line consisting of all symmetric intervals centered at zero in the dual group $\mathbb{Z}$ or $\mathbb{R}$. For short, we denote the corresponding spaces ``$u$''  by $u(\mathbb{T})$ and $u(\mathbb{R})$, or even simply by $u$. Surely, we take for $\mu$ the Lebesgue measure on the circle or on the line. Then the two above spaces do satisfy Axiom A2 (A1 being a triviality), but this is quite involved. Indeed, eventually this is based on the Carleson almost everywhere convergence theorem for classical Fourier expansions. See \cite{vin} and \cite{spain} for the proof of A2 in these cases, and also Subsections 2.5 and 2.6 in \cite{carleson} for some explanations.

Thus, given a weight $w$ (we still assume that $w\le1$), a number $\varepsilon >0$, and a measurable set $a$ of finite measure on the line or on the circle, we start as at the beginning of Subsection 2.3, but first we modify $f = \chi_a w$ in accordance with Theorem $\ref{correction}$.  The resulting function $\widetilde{f}=f\varphi$ satisfies $0\le\widetilde{f}\le w$ and
\begin{equation}\label{log}
\|\widetilde{f}\|_u \le \const\log\left(2+\frac{\int_aw}{\varepsilon}\right),
\end{equation}
and also $\|\chi_a w-\widetilde{f}\|_1\le\varepsilon$.  Next, we put $f_0^{(0)} = \widetilde{f} * \varphi_{U_0}$ as before, so as to ensure \eqref{zero} but with $\varepsilon +\rho_0$ instead of $\varepsilon$ on the right. This new function $f_0^{(0)}$ will satisfy \eqref{log} because $\widetilde{f}$ does. Then we proceed as previously. All $O(\varepsilon)$`s in Subsection 2.4.2 will remain $O(\varepsilon)$. We do not enter in further details.

\begin{remark}
By \cite{3}, the conclusion of Theorem \ref{line} is also true for certain zero-dimensional compact groups, specifically, for those linked with bounded Vilenkin systems (in particular, the dyadic group with the Walsh system in the usual ordering fits). Again, the verification of Axiom A2 for the corresponding space of functions with uniformly bounded Fourier sums is based eventually on an analog of the Carleson almost everywhere convergence theorem for Vilenkin systems.
\end{remark}

Finally, we prove the announced correction theorem about  essentially bounded functions with support of finite measure, as opposed to the mere indicator functions. We restate it and sketch the proof for the group $\mathbb{R}$, but it will be clear that similar arguments apply to $\mathbb{T}$ and to the zero-dimensional groups mentioned in the last remark. As above, the space $u(\mathbb{R})$ corresponds to the summation basis of symmetric intervals, and also we are given a sufficient pair $(R,S)$ of subsets of $\mathbb{R}$.

%Finally, we state yet another correction result, which is apparently new. This claim will combine certain  sharp estimates of Theorem \ref{correction} and ``thin'' spectrum at infinity for the corrected function in the spirit of Theorem \ref{mainthm}. We no longer restrict ourselves to indicator functions. 

%Let us formulate this statement for the group $\mathbb{R}$ for definiteness. Similar statements are true for $\mathbb{T}$ and for the zero-dimensional groups mentioned in the last remark. As above, the space $u(\mathbb{R})$ is generated by the summation basis of symmetric intervals, and also we are given a sufficient pair $(R,S)$ of subsets of $\mathbb{R}$.

\begin{trm}[{\rm\ref{fin}}]
Let $\varepsilon > 0$. Given a function $h\in L^{\infty}(\mathbb{R})$ supported on a set of finite measure and with $\|h\|_{\infty}\le 1$, there is a function $f\in u(\mathbb{R})$ such that $\|f\|_\infty\le 20$, $|\{h\ne f\}|\le\varepsilon$, and $f$ satisfies an estimate like \eqref{log}. Furthermore, the spectrum of $f$ is included in $K\cup R\cup S$, where $K$ is a compact set depending on $h$.
\end{trm}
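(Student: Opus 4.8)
I would deduce Theorem~\ref{fin} from Theorem~\ref{line} by a layer–cake reduction. First reduce to a nonnegative function: write $h=(\Re h)^{+}-(\Re h)^{-}+i\bigl((\Im h)^{+}-(\Im h)^{-}\bigr)$, four nonnegative functions bounded by $1$ and supported in $\supp h$; correct each of them with $\varepsilon$ replaced by $\varepsilon/4$ and recombine, which adds the four exceptional sets and multiplies $\|f\|_\infty$ by at most $2\sqrt2$ (whence the harmless constant $20$). So assume $0\le h\le 1$, $\supp h\subset A$, $|A|<\infty$. Expand $h$ in binary, $h=\sum_{j\ge 1}2^{-j}\chi_{a_j}$ with $a_j=\{x:\text{the }j\text{th binary digit of }h(x)\text{ is }1\}\subset A$ (the terminating expansion at dyadic values). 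For each $j$ apply Theorem~\ref{line} to $a_j$ with accuracy $\varepsilon_j:=\varepsilon 2^{-j-2}$, obtaining $b_j$ with $|a_j\triangle b_j|<\varepsilon_j$, $\spec(\chi_{b_j})\subset K_j\cup R\cup S$ ($K_j$ compact), and $\|\chi_{b_j}\|_u\le C\log(2+\varepsilon_j^{-1}|a_j|)\le C\log(2+2^{j+2}\varepsilon^{-1}|A|)$. Put $f:=\sum_{j\ge1}2^{-j}\chi_{b_j}$ (and recombine the four pieces).

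\textbf{The easy verifications.} Since $f(x)=h(x)$ whenever $\chi_{b_j}(x)=\chi_{a_j}(x)$ for all $j$, we get $\{f\ne h\}\subset\bigcup_j(a_j\triangle b_j)$, hence $|\{f\ne h\}|\le\sum_j\varepsilon_j<\varepsilon$ (with room for the four–fold splitting). The bound $\|f\|_\infty\le\sum_j2^{-j}=1$ (before recombining) is immediate. For the norm $\|\cdot\|_u$: since $\|P_Bg\|_\infty\le\|g\|_u$ for every $B$ in the basis, $\sup_B\|P_Bf\|_\infty\le\sum_j2^{-j}\|\chi_{b_j}\|_u\le C\sum_j2^{-j}\log(2+2^{j+2}\varepsilon^{-1}|A|)$, and using $\log(2+2^{j+2}\varepsilon^{-1}|A|)\le(j+3)\log2+\log(2+\varepsilon^{-1}|A|)$ the series sums to $O(1)+\log(2+\varepsilon^{-1}|A|)=O\!\bigl(\log(2+\varepsilon^{-1}|\supp h|)\bigr)$; the $L^1$–part is at most $\sum_j2^{-j}(|A|+\varepsilon_j)<\infty$. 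Finally $\spec(f)\subset\overline{\bigcup_jK_j}\cup R\cup S$.

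\textbf{The main obstacle.} The last inclusion is useless unless $\overline{\bigcup_jK_j}$ is compact, and a priori the sets $K_j$ grow with $j$ (the binary–digit sets $a_j$ become progressively rougher, so the approximate identity $\Phi_{U_0}$ needed in the proof of Theorem~\ref{line} must be progressively more concentrated). This is the one point requiring real work. The way I would handle it is \emph{not} to invoke Theorem~\ref{line} as a black box layer by layer, but to re-run the construction of Subsection~2.3 once, feeding in all the layers $\{a_j\}$ simultaneously: maintain functions $f_k^{(n),(j)}$ built with the \emph{same} approximate identity $\Phi_{U_0}$, so that the single compact set $K=U_0-U_0$ serves every layer, and choose the characters of Lemma~\ref{mainlm} so that the partial corrections of all layers and all steps have pairwise disjoint spectra and avoid $K$, $R$, $S$ and the relevant sets of $\mathcal B$. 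Here $U_0$ is chosen only after the whole binary expansion of $h$ has been inspected; since the construction uses nothing about $\chi_{a_j}$ beyond the $L^1$–smallness of $\chi_{a_j}-\chi_{a_j}*\Phi_{U_0}$, which for each \emph{fixed} set can be made as small as we wish by enlarging $U_0$, one fixes $U_0$ so that this smallness holds for the finitely many layers that matter for a prescribed total error and absorbs the tail into the error budget; the precision parameters $\rho_n^{(j)}$ are then allocated summably over $n$ and $j$. (An alternative is the graph construction: replace $h$ by $\widetilde a=\{(x,y)\in\mathbb R^2:0\le y<h(x)\}$, apply the two–dimensional Theorem~\ref{line} with the sufficient pair $(R\times\mathbb R,\,S\times\mathbb R)$ and the basis of solid sets, and set $f(x)=\int_0^1\chi_{\widetilde b}(x,y)\,dy$, whose spectrum lands in $\pi_1(\widetilde K)\cup R\cup S$ with $\pi_1(\widetilde K)$ compact and whose norm bounds follow from $P_Bf(x)=\int_0^1[P_{B\times\mathbb R}\chi_{\widetilde b}](x,y)\,dy$; the price is that this only gives $\|f-h\|_1<\eta$, and upgrading that to $|\{f\ne h\}|\le\varepsilon$ becomes the hard step instead.) Apart from this bookkeeping the argument transfers verbatim to $\mathbb T$ and to the bounded–Vilenkin groups of Remark~\ref{dim0}.
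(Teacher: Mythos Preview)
Your layer-cake reduction is a natural idea, and you have correctly identified the real difficulty: controlling $\overline{\bigcup_j K_j}$. Unfortunately, neither of your proposed fixes closes the gap. In the first one you want a single $U_0$ to serve all layers, but the inductive construction requires $\|\chi_{a_j}-\chi_{a_j}*\Phi_{U_0}\|_1$ to be small for \emph{each} $j$, with errors summable over $j$, in order to get $|a_j\triangle b_j|$ small enough that $\sum_j|a_j\triangle b_j|<\varepsilon$. ``Absorbing the tail into the error budget'' does not help here: even a single layer $j>N$ with $|a_j\triangle b_j|$ of order $1$ (which is what you get with a fixed $U_0$ and a rough $a_j$) already forces $|\{f\ne h\}|$ to be of order $1$, since any discrepancy in a single binary digit makes $f(x)\ne h(x)$ regardless of the weight $2^{-j}$. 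Your second (graph) idea founders, as you yourself note, on the passage from $\|f-h\|_1$ small to $|\{f\ne h\}|$ small.

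The paper's proof avoids this entirely by exploiting the \emph{weighted} form of Theorem~\ref{line} --- indeed, this application is the reason weights (``rails'') were introduced. After the same reduction to $0\le h\le 1$, one uses Lusin's theorem to find a compact $a\subset\supp h$ with $|\supp h\setminus a|\le\varepsilon$ on which $h$ is continuous, extends $h|_a$ to a uniformly continuous $v$ on $\mathbb{R}$ with $0\le v\le 1$, and applies Theorem~\ref{line} to the single set $a$ twice, once with weight $w_1=v+1$ and once with $w_2\equiv 1$, obtaining $b_1,b_2$. Then $f=\chi_{b_1}w_1-\chi_{b_2}$ equals $v=h$ on $a\cap b_1\cap b_2$, vanishes off $b_1\cup b_2$, and satisfies all the required bounds; its spectrum lies in $K_1\cup K_2\cup R\cup S$ with $K_1\cup K_2$ compact because only \emph{two} applications of Theorem~\ref{line} were used. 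So the trick is to encode the values of $h$ in the weight rather than in an infinite decomposition of the support.
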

\begin{proof}
Here $h$ is, in general, complex-valued, but the claim can be reduced to the case of a positive $h$ if we ensure a smaller constant (say, $3$) in place of 20. So, let $0\le h\le 1$, and let $A$ be the support of $h$. Find a compact set $a\subset A$ with $|A\setminus a|\le\varepsilon$ such that $h$ is continuous on  $a$, and then extend $h|_a$ up to a nonnegative uniformly continuous function $v$ on $\mathbb{R}$ with $v\le 1$. Finally, consider the weights (``rails'') $w_1=v+1$ and $w_2=1$ on $\mathbb{R}$.\footnote{Surely, the fact that now the weight $w_1$ is no longer bounded by $1$ from above does not present an obstruction.}

We apply Theorem \ref{line} to the set $a$ consecutively with the weights $w_1$ and $w_2$, obtaining two sets $b_1$ and $b_2$. The function $\chi_{b_1}w_1-\chi_{b_2}w_2$ does the job.
\end{proof}

Note that we cannot eliminate the uncontrollable set $K$ in this statement because of the sharp estimate \eqref{log}. (Should this be possible, all  $L^1$-functions would have spectrum in $R\cup S$.) To the contrary, in the results of \cite{1}, \cite{2}, and \cite{menshov}, the spectrum of a corrected function always lies in $R\cup S$, but, naturally, the control of the size of this function and its partial Fourier integrals is much weaker.

\end{document}